\newtheorem{thm}{Theorem}[section]
\newtheorem*{thm*}{Theorem}
\newtheorem{cor}[thm]{Corollary}
\newtheorem*{cor*}{Corollary}
\newtheorem{lem}[thm]{Lemma}
\newtheorem{prop}[thm]{Proposition}
\newtheorem*{con*}{Conjecture}
\newtheorem*{prob*}{Problem}
\theoremstyle{definition}
\theoremstyle{remark}
\newtheorem{rem}[thm]{Remark}
\newcommand{\A}{\mathcal{A}}
\newcommand{\B}{\mathcal{B}}
\newcommand{\C}{\mathcal{C}}
\newcommand{\h}{\mathcal{H}}
\newcommand{\M}{\mathcal{M}}
\newcommand{\F}{\mathcal{F}}
\newcommand{\V}{\mathcal{V}}
\newcommand{\bK}{\mathbf{K}}
\newcommand{\bbZ}{\mathbb{Z}}
\newcommand{\hol}{\text{Hol}}
\newcommand{\aut}{\text{Aut}}
\newcommand{\inn}{\text{Inn}}
\newcommand{\<}{\langle}
\renewcommand{\>}{\rangle}
\def\gl{{\rm GL}_2({\mathbb Z})}
\begin{document}
\title{On the $K$-theory of Certain Extensions of Free Groups}

\author{Vassilis Metaftsis}
\address{Department of Mathematics
University of the Aegean,
Karlovassi, Samos, 83200 Greece}
\email{vmet@aegean.gr}

\author[Stratos Prassidis]{Stratos Prassidis}
\address{Department of Mathematics
University of the Aegean,
Karlovassi, Samos, 83200 Greece}
\email{prasside@aegean.gr}

\maketitle

\section{Introduction}

The Fibered Farrell--Jones Conjecture is the main conjecture in geometric topology.
It is used for the calculation of the obstruction groups that appear in geometric rigidity and classification problems.

In this paper we are interested in the $K$-theory FJC and its variation $K$-FJCw, with finite wreath products.
The $K$-FJCw has been proved for an extensive list of classes of groups. One notable case which remains open is
the group $\aut(F_n)$, the automorphism group of the free group on $n$ letters. In \cite{mepr} the $K$-FJC is proved
for $n = 2$. Actually, in \cite{mepr}, the $K$-FJC is proved for $\hol(F_2)$, the holomorph of $F_2$. We notice that the
extension to $K$-FJCw is a direct computation.
In this paper we extend the result to certain subgroups of $\aut(F_n)$ that are constructed from $\hol(F_2)$. 
More precisely, there is a monomorphism $\hol(F_n) \to \aut(F_{n+1})$. This way we construct a sequence of
groups with:
$$\h_{(1)} = \hol(F_2), \; \h_{(n)} = F_{n+1} {\rtimes} \h_{n-1}, \; n \ge 2.$$
Notice that $\h_{(n)} < \hol(F_{n+1})$.
The main theorem of the paper is the following.

\begin{thm*}[Main Theorem] 
The $K$-FJCw holds for the groups $\h_{(n)}$.
\end{thm*}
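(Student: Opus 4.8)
The plan is to argue by induction on $n$, using the standard inheritance properties of the $K$-theoretic Farrell--Jones conjecture with finite wreath products ($K$-FJCw). For $n=1$ there is nothing to prove: $\hol(F_2)$ satisfies the $K$-FJC by \cite{mepr}, and, as already remarked in the introduction, passing to finite wreath products is a direct computation. So fix $n\ge 2$ and assume that $\h_{(n-1)}$ satisfies $K$-FJCw.

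By construction of the sequence $\h_{(\bullet)}$ there is a split short exact sequence
\[
1\longrightarrow F_{n+1}\longrightarrow \h_{(n)}\xrightarrow{\ p\ }\h_{(n-1)}\longrightarrow 1 .
\]
One cannot simply invoke the inclusion $\h_{(n)}<\hol(F_{n+1})$ together with closure under subgroups, since $K$-FJCw for $\hol(F_{n+1})$ --- equivalently for $\aut(F_{n+1})$ --- is precisely the problem that is open; the point is that $\h_{(n)}$ is a subgroup small enough to be reachable. Instead I would use the extension principle for $K$-FJCw: if $1\to K\to G\xrightarrow{q}Q\to 1$ is exact, $Q$ satisfies $K$-FJCw, and $q^{-1}(V)$ satisfies $K$-FJCw for every virtually cyclic subgroup $V\le Q$, then $G$ satisfies $K$-FJCw. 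Applying this with $Q=\h_{(n-1)}$ (which satisfies $K$-FJCw by the inductive hypothesis), the inductive step is reduced to proving $K$-FJCw for the preimages $p^{-1}(V)$ of virtually cyclic subgroups $V\le\h_{(n-1)}$. Since the sequence above splits, $p^{-1}(V)=F_{n+1}\rtimes V$, an extension of a virtually cyclic group by a finitely generated free group. This is exactly the mechanism by which $K$-FJCw is obtained for $\hol(F_2)=\h_{(1)}$ in \cite{mepr}.

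It then remains to establish $K$-FJCw for $F_{n+1}\rtimes V$ with $V$ virtually cyclic. If $V$ is finite, this group is virtually free, hence hyperbolic, and therefore satisfies $K$-FJCw. If $V$ is infinite, it contains an infinite cyclic subgroup of finite index, so $F_{n+1}\rtimes V$ contains the free-by-cyclic group $F_{n+1}\rtimes\bbZ$ as a subgroup of finite index; invoking the known validity of $K$-FJCw for free-by-cyclic groups (mapping tori of free-group automorphisms) together with the fact that $K$-FJCw is inherited by finite-index overgroups, $F_{n+1}\rtimes V$ satisfies $K$-FJCw. This closes the induction.

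The essential obstacle is this last step, the analysis of the preimages $p^{-1}(V)=F_{n+1}\rtimes V$: the argument depends on having $K$-FJCw available for free-by-(virtually cyclic) groups, which in turn rests on the Farrell--Jones machinery for mapping tori of free-group automorphisms, and on carrying the wreath-product strengthening correctly through both the extension principle and the reduction to finite-index subgroups. The purely group-theoretic points --- that the displayed sequence is split exact and that $p^{-1}(V)=F_{n+1}\rtimes V$, using the monomorphism $\hol(F_n)\hookrightarrow\aut(F_{n+1})$ and the induced action on $F_{n+1}$ --- are routine, and I would only indicate them briefly.
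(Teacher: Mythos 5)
Your induction skeleton is exactly the paper's: reduce via the split extension $1\to F_{n+1}\to\h_{(n)}\xrightarrow{p}\h_{(n-1)}\to 1$ and the fibering property to proving $K$-FJCw for the preimages $p^{-1}(V)=F_{n+1}\rtimes V$, and handle finite $V$ by observing that $F_{n+1}\rtimes V$ is virtually free, hence hyperbolic. Up to that point you match Theorem \ref{thm-iso}.

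The gap is in your treatment of infinite $V$. You dispose of it by ``invoking the known validity of $K$-FJCw for free-by-cyclic groups (mapping tori of free-group automorphisms).'' No such general result is available in the paper's toolkit (nor was it at the time): $F_{n+1}\rtimes_\varphi\bbZ$ for an arbitrary $\varphi\in\aut(F_{n+1})$ need not be hyperbolic or CAT(0), and the Farrell--Jones conjecture for general free-by-cyclic groups was open. This step is precisely where the paper does its real work, and it succeeds only because the monodromies arising here are not arbitrary: an element of $\h_{(n-1)}\hookrightarrow\aut(F_{n+1})$ acts as an automorphism on $x_1,x_2$ and by \emph{conjugation} on $x_3,\dots,x_{n+1}$. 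Exploiting this, Proposition \ref{prop-npc} rewrites $G_n=F_{n+1}\rtimes\bbZ$ as an amalgam $G_{n-1}*_{\bbZ}\bbZ^2$ and concludes CAT(0)-ness by induction (with Brady's theorem as the base case $n=1$), whence $K$-FJCw by Wegner; Proposition \ref{general} and Corollary \ref{cor-ic} (plus the Appendix) do the analogous analysis for $V\cong D\times\bbZ$ with $D$ finite, again via explicit amalgamated-product decompositions over $\bbZ\times E(D)$ into CAT(0) pieces. Also note the paper works with the family $\F\B\C$ rather than all virtually cyclic subgroups (using \cite{dqr}), and uses the structural fact (Remark \ref{rem-hol}) that the infinite $\F\B\C$ subgroups of $\h_{(n-1)}$ are direct products $F\times\bbZ$ with $F\in\{\bbZ/2\bbZ,\bbZ/3\bbZ,D_4\}$ -- this is what legitimizes your finite-index/overgroup reduction, but it must be established, not assumed. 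As written, your argument correctly identifies the essential obstacle but then resolves it by citation to a theorem that does not exist in this context; to repair it you would need to reproduce the CAT(0) analysis of the specific mapping tori, which is the core content of the paper.
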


As an application of the Main Theorem, we calculate the lower $K$-theory groups of $\h_{(n)}$:
\begin{enumerate}
\item $K_i(\bbZ\h_{(n)}) = 0$, $i \le -1$.
\item ${\tilde K}_0(\bbZ\h_{(n)}) \cong NK_0(\bbZ D_4) {\oplus} NK_0(\bbZ D_4)$.
\item $Wh(\h_{(n)}) \cong NK_1(\bbZ D_4) {\oplus} NK_1(\bbZ D_4)$.
\end{enumerate}

The main point of the general FJCw is that the $K$-theory of a group can be computed from the $K$-theory of its 
virtually cyclic subgroups. These are of three types: finite groups, groups that admit an epimorphism to $\bbZ$
with finite kernel, groups that admit an epimorphism to $D_{\infty}$ (the infinite dihedral group) with finite kernel. In 
\cite{dqr}, it was shown that the first two classes are enough for the $K$-FJCw. That means that the $K$-groups of a group
can be computed from the $K$-theory of finite subgroups and groups of the form $F{\rtimes}{\bbZ}$ with $F$ finite.
For the proof of the main theorem, we use two properties of the FJCw:
\begin{enumerate}
\item If the FJCw holds for a group $G$, it holds for all the subgroups of $G$.
\item Let 
$$1 \to H \xrightarrow{f} G \xrightarrow{g} K \to 1$$ be an exact sequence of groups. We assume that:
\begin{enumerate}
\item The FJCw holds for $K$.
\item The FJCw holds for $g^{-1}(V)$ where $V$ is a virtually cyclic subgroup of $K$.
\end{enumerate}
Then the FJCw holds for $G$
\end{enumerate}
In \cite{mepr}, it was shown that the finite subgroups of $\hol(F_2)$ are isomorphic to one of the groups of the following
list:
$${\bbZ}/2{\bbZ}, \; {\bbZ}/3{\bbZ}, \; {\bbZ}/4{\bbZ}, \; D_2, \; D_4.$$
The only subgroups of the second type are isomorphic to ${\bbZ}/2{\bbZ}{\times}{\bbZ}$. That implies that the finite 
subgroups of $\h_{(n)}$ are isomorphic to one of the above list. Also, we show that the subgroups of the second type are isomorphic to products of finite groups times $\bbZ$. In other words, semidirect products do not appear. 

For the proof of the main theorem, we use induction and Property (2) of the FJCw. We show that the groups that are
the inverses images of virtually cyclic groups of first or second type are either hyperbolic groups or CAT(0)-groups
for which the $K$-FJCw holds.

For the calculations of the lower $K$-groups, we notice that the part of $K$-theory of $\h_{(n)}$ that is detected 
from the finite groups vanishes. The result follows from the calculation of the cockerel of the map from the 
$K$-theory detected from the finite subgroups to the total $K$-theory. For this, we use \cite{ba}.

\section{Preliminaries and Notation}\label{sec-not}

For a group $G$ let $\aut(G)$ the group of automorphisms of $G$ and $\hol(G)$ the universal split extension determined by $G$:
$$1 \to G \to \hol(G) \to \aut(G) \to 1.$$
In general, there is an embedding $E: \hol(G) \to \aut(G*\bbZ)$ given by: for $g\in G$,
$$E(g)(x) = \left\{
\begin{array}{cr}
x, & \text{for}\; x\in G\\
gxg^{-1}, & \text{for}\; x\in \bbZ
\end{array}\right.$$
and for ${\alpha} \in \aut{G}$,
$$E({\alpha})(x) = \left\{
\begin{array}{cr}
{\alpha}(x), & \text{for}\; x\in G\\
x, & \text{for}\; x\in \bbZ
\end{array}\right.$$
Thus, we can define the split group extension $(G * \bbZ) {\rtimes} Å(\hol(G)) < \hol(G*\bbZ) < \aut((G*\bbZ)*\bbZ)$. 

Inductively, we define ${\h}_{(i)}(G)$ to be:
$${\h}_{(0)}(G) = G, \; {\h}_{(1)}(G) = \hol(G), \; {\h}_{(n)}(G) = (G * F_{n-1}) {\rtimes} Å({\h}_{(n-1)}(G)), \;\; n \ge 2.$$ where $F_{n+1}$ is the group on $(n+1)$-generators.
We write ${\h}_{(n)} =  {\h}_{(n)}(F_2)$, for the group corresponding to $F_2$. Then there is a split exact sequence:
$$1 \to F_{n+1} \to {\h}_{(n)} \to E({\h}_{(n-1)}) \to 1.$$

We are interested in the Fibered Farrell-Jones Conjecture (FJC) for the groups $\h_{(n)}$. We will review 
the general constructions. Let $G$ be a group and $\C$ be a class of subgroups. Then $E_{\C}G$
denotes the classifying space of the class $\C$. We are interested in the following classes of subgroups of $G$:
\begin{itemize}
\item $1$, the class of the trivial subgroup.
\item $\F$, the class of finite subgroups.
\item $\F\B\C$, the class of finite-by-cyclic subgroups.
\item $\V \C$, the class of virtually cyclic subgroups.
\item $\A ll$, the class of all subgroups.
\end{itemize}
It is obvious that $1 \subset \F \subset \F\B\C \subset \V \C \subset \A ll$. Instead of the classical $K$-theoretic FJC, we will
consider the $K$-theoretic Isomorphic Conjecture with coefficients in an additive category $\A$.
It is known that this implies also the Fibered Isomorphism Conjecture (\cite{br}).
It states that the assembly map
$$H^G_n(E_{\V\C}G; {\bK}_{\A}) \to H^G_n(E_{\A ll}G; {\bK}_{\A}) = H^G_n(pt; {\bK}_{\A})$$
is an isomorphism. If a group satisfies the Conjecture, we say that the group satisfies the
FJC. We say that a group $G$ satisfies the FJCw if the wreath product $G{\wr}H$ satisfies the FJC for each
finite group $H$ and with coefficients.

We need the following basic facts:

\begin{rem}\label{rem-properties}
\begin{enumerate}
\item Word hyperbolic groups satisfy the $K$-FJCw (\cite{blr}). 
\item CAT(0)-groups satisfy the $K$-FJCw  (\cite{we}).
\item Strongly poly-free groups or, more generally, weak strongly poly-surface groups
satisfy the $K$-FJCw (\cite{rou}).
\item Let $G$ satisfies the $K$-FJCw and 
$$1 \to G \to K \to H \to 1$$
be an exact sequence with $H$ finite. Then $K$ satisfies the $K$-FJCw (\cite{rou}).
\end{enumerate}
\end{rem}

In \cite{ba} it was shown that, for a ring $R$, the relative map 
$$H^G_n(E_{\F}G; {\bK}R^{-{\infty}}) \to H^G_n(E_{\V \C}G; {\bK}R^{-{\infty}})$$
is a split injection. Also, in \cite{dqr} it was shown that the natural map
$$H^G_n(E_{\F\B\C}G; {\bK}R^{-{\infty}}) \to H^G_n(E_{\V \C}G; {\bK}R^{-{\infty}})$$
is an isomorphism. Taking the corresponding cockerels, we have that
$$H^G_n(E_{\F}G \to E_{\V \C}G; {\bK}R^{-{\infty}}) \cong 
H^G_n(E_{\F}G \to E_{\F\B\C}G; {\bK}R^{-{\infty}})$$
Now, let the group $G$ satisfy the condition ${\M}_{{\F}\subset {\F\B\C}}$ of \cite{lw}, which states that
every infinite group in ${\F\B\C}$ is contained in a unique maximal group in ${\F\B\C}$. Then
$$\bigoplus_{V\in {\M}}H_n^{N_G(V)}(E_{\F}N_G(V) \to E_1W_G(V); {\bK}R^{-{\infty}}) \xrightarrow{\cong}  H^G_n(E_{\F}G \to E_{\F\B\C}G; {\bK}R^{-{\infty}}).$$
Here $\M$ is a set of representatives of the conjugacy classes of the maximal infinite groups in 
$\F\B\C$ and $W_G(V)=N_G(V)/V$ is the Weyl group of $V$ (\cite[Corollary 6.1]{lw})
Remark 6.2 in \cite{lw} implies that there is a spectral sequence
\begin{equation}\label{eq-spectral1}
\begin{array}{ll}
E^2_{p,q} & =  H_p^{W_G(V)}(E_1W_G(V); H^V_q(E_{\F}N_G(V) \to \{pt\}); {\bK}R^{-{\infty}})
\Rightarrow \\
&H_{p+q}^{N_G(V)}(E_{\F}N_G(V) \to E_1W_G(V); {\bK}R^{-{\infty}})
\end{array}
\end{equation}
That is obtained by choosing $X = E_1W_G(V)$ and noticing that $E_{\F}N_G(V) {\times}E_1W_G(V)$ is a space of type $E_{\F}N_G(V)$ with the diagonal action. Also,
Example 6.3 in \cite{lw} implies that, if $V = F{\rtimes}\bbZ$ then $H^V_q(E_{\F}N_G(V) \to \{pt\})$
is the non-connective version of Farrell's  twisted Nil-term. Thus the spectral sequence becomes:
\begin{equation}\label{eq-spectral}
E^2_{p,q} =  H_p^{W_G(V)}(E_1W_G(V); \mathbf{Nil}_R)\Rightarrow H_{p+q}^{N_G(V)}(E_{\F}N_G(V) \to E_1W_G(V); {\bK}R^{-{\infty}})
\end{equation}

\begin{rem}\label{rem-hol}
In \cite{mepr}, it was shown that:
\begin{enumerate}
\item The finite subgroups of $\text{Aut}(F_2)$ and ${\hol}(F_2)$ are isomorphic to ${\bbZ}/2{\bbZ}$,
${\bbZ}/3{\bbZ}$, ${\bbZ}/4{\bbZ}$, $D_2$ and $D_4$. The maximal finite subgroups are 
${\bbZ}/3{\bbZ}$ and $D_4$. From the construction, the same is true for ${\h}_{(n)}$ for
all $n \ge 1$.
\item Although this is not explicitly shown in \cite{mepr}, up to isomorphism, there are various infinite ${\F}{\B}{\C}$ subgroups of ${\hol}(F_2)$ which
are isomorphic to ${\bbZ}/2{\bbZ}{\times}{\bbZ}$. In fact, the construction of ${\h}_{(n)}$  shows that 
the infinite ${\F}{\B}{\C}$ subgroups are $F{\times}{\bbZ}$ where $F < {\h}_{(n)}$ is finite. The
subgroup $\bbZ$  is a subgroup in the factors that are complementary to ${\hol}(F_2)$. 
That is because each element that belongs to 
$F_i < \h_{(n)}$,
$3 \le i \le n$, commutes with the subgroups of $\hol(F_2)$.

Thus the maximal infinite ${\F}{\B}{\C}$ subgroups are of the following types: ${\bbZ}/2{\bbZ}{\times}{\bbZ}$,
${\bbZ}/3{\bbZ}{\times}{\bbZ}$ and $D_4{\times}{\bbZ}$.
\end{enumerate}
\end{rem}

We will show that certain mapping tori of the free groups that are contained in ${\h}_{(n)}$ are CAT(0)-groups. Notice that,
by the work of Brady \cite{brady}, all mapping tori $F_2\rtimes\bbZ$ contained in $F_2\rtimes \aut(F_2)$ are CAT(0).
We show that this is true for all mapping tori $F_{n+1}\rtimes\bbZ$ contained in $F_{n+1}\rtimes E(\h_{(n-1)})$.

%

\begin{prop}\label{prop-npc}
Let $G_n=F_{n+1}{\rtimes}\bbZ < F_{n+1}\rtimes E({\h}_{(n-1)})$. Then $G_n$ is CAT(0). 
\end{prop}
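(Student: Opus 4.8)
The plan is to induct on $n$, reducing $G_n$ via an amalgamated‑product splitting to a group of the same shape one level down, with Brady's theorem as the base case. First I would pass to a mapping torus. A copy of $\bbZ$ sitting inside $F_{n+1}\rtimes E(\h_{(n-1)})$ is generated by some element $(w,\psi)$, $w\in F_{n+1}$, $\psi\in E(\h_{(n-1)})$ of infinite order, and it acts on $F_{n+1}$ by $x\mapsto w\,\psi(x)\,w^{-1}$; since this automorphism and $\psi$ represent the same outer automorphism, the resulting mapping torus is isomorphic to $F_{n+1}\rtimes_\psi\bbZ$. So I may assume $G_n=F_{n+1}\rtimes_\psi\langle s\rangle$ with $\psi\in E(\h_{(n-1)})$ and $s$ the stable letter, $sxs^{-1}=\psi(x)$.

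Next I would read off the shape of $\psi$. Write $F_{n+1}=F_n\ast\langle t\rangle$, with $t$ the generator adjoined at the last stage of the construction of $\h_{(n)}$. Unwinding the definition of the embedding $E\colon\hol(F_n)\to\aut(F_{n+1})$ together with the inductive decomposition $\h_{(n-1)}=F_n\rtimes E(\h_{(n-2)})$ (and $\h_{(1)}=\hol(F_2)=F_2\rtimes\aut(F_2)$ at the bottom), every $\psi\in E(\h_{(n-1)})$ preserves the free factor $F_n\le F_{n+1}$, restricts on it to an automorphism $\beta\in E(\h_{(n-2)})$ --- or $\beta\in\aut(F_2)$ when $n=2$ --- and sends $t$ to $g\,t\,g^{-1}$ for some $g\in F_n$. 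Consequently the subgroup $\langle F_n,s\rangle=F_n\rtimes_\beta\langle s\rangle$ of $G_n$ is again of the type in the statement, with $n-1$ in place of $n$; call it $G_{n-1}$. By the inductive hypothesis $G_{n-1}$ is CAT(0); the base case $n=1$ --- that is, $F_2\rtimes_\gamma\bbZ\le F_2\rtimes\aut(F_2)$ for arbitrary $\gamma\in\aut(F_2)$ --- is precisely Brady's theorem \cite{brady}.

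Then I would make the splitting explicit. From the presentation $G_n=\langle F_n,t,s\mid sxs^{-1}=\beta(x)\ (x\in F_n),\ sts^{-1}=g\,t\,g^{-1}\rangle$, the subgroup $G_{n-1}=\langle F_n,s\rangle$ appears as a vertex group, and the last relation is equivalent to $[c,t]=1$, where $c:=g^{-1}s\in G_{n-1}$. Hence
$$G_n=\big\langle\, G_{n-1},\, t \ \big|\ [\,c,t\,]=1 \,\big\rangle\ \cong\ \amal{G_{n-1}}{\langle c\rangle}{\big(\langle c\rangle\times\langle t\rangle\big)},$$
an amalgam of $G_{n-1}$ with $\bbZ^2$ over the infinite cyclic subgroup $\langle c\rangle$; it is infinite cyclic because $c$ maps to a generator of $G_{n-1}/F_n\cong\bbZ$.

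Finally I would upgrade this to a geometric action on a CAT(0) space. Let $G_{n-1}$ act geometrically on a CAT(0) space $X$. Every element of $G_{n-1}$ acts as a semisimple isometry of $X$, and since $c$ has infinite order and the action is proper, $c$ is not elliptic, hence hyperbolic; so $c$ has an axis $\ell$, a complete convex $\langle c\rangle$-invariant line on which $\langle c\rangle$ acts cocompactly by translations. The other vertex group $\bbZ^2=\langle c\rangle\times\langle t\rangle$ acts geometrically on $\bbR^2$, with $\langle c\rangle$ acting cocompactly on a straight line. Gluing copies of $X$ and of $\bbR^2$ along these isometric complete convex lines according to the Bass--Serre tree of the splitting --- iterating the gluing theorem for CAT(0) spaces over the finite subtrees --- produces a complete CAT(0) space on which $G_n$ acts properly and cocompactly, so $G_n$ is CAT(0). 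The step carrying the real weight is this last one: checking that the periodic‑axis gluing over the (locally infinite) Bass--Serre tree genuinely yields a CAT(0) space with a geometric $G_n$-action. The other point needing care is the structural description of $\psi$ --- that the automorphisms in $E(\h_{(n-1)})$ respect the nested free factors $F_2\le\cdots\le F_{n+1}$ and act ``triangularly'' on them --- since it is this that makes the inductive reduction legitimate.
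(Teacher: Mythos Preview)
Your proof is correct and follows essentially the same route as the paper's: induct on $n$ with Brady's theorem as the base, use the triangular shape of $\psi\in E(\h_{(n-1)})$ to split $G_n$ as $G_{n-1}*_{\bbZ}\bbZ^2$ via the substitution $c=g^{-1}s$, and conclude that the amalgam is CAT(0). The paper dispatches your final gluing step by citing \cite{brha}, Part~II, Proposition~11.19, which is exactly the axis-and-Bass--Serre-tree construction you sketch, so the step you flag as carrying the real weight is already packaged there.
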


\begin{proof}
Set $G_n = F_{n+1}{\rtimes}\bbZ < {\h}_{(n)}$. By definition, the $\bbZ$-action on $F_{n+1}$ is from an element of ${\h}_{(n-1)}$. That means that in the first two
generators it is an automorphism of the free group they generate and on the other generators is conjugation by words on the previous generators. We will use
induction. For $n = 1$, $G_1 = F_2{\rtimes}{\bbZ}$ which is CAT(0) (\cite{brady}).

For general $n$, let $F_{n+1} = {\langle}x_1, x_2, \dots, x_{n+1}{\rangle}$.
Notice that ${\h}_{(n-1)} = F_n{\rtimes}{\h}_{(n-2)}$. Then every $g \in {\h}_{(n-1)}$ can be written as $g = g_1g_2$, with $g_1\in F_n$ and
$g_2 \in {\h}_{(n-2)}$. Then the embedding ${\h}_{(n-1)}$ in $\aut(F_{n+1})$ sends $g$ to $\tilde{g}$ with:
$$\tilde{g}(x_i) = g_2(x_i), \; i = 1, 2, \ldots, n, \;\; \tilde{g}(x_{n+1}) = g_1x_{n+1}g_1^{-1}.$$
Then 
$$G_n = F_{n+1}{\rtimes}_{\tilde{g}}\bbZ ={\langle}t, x_1, x_2, \dots, x_{n+1}: tx_it^{-1} = g_2(x_i), \; i = 1, 2 \ldots, n, \; tx_{n+1}t^{-1} = g_1x_{n+1}g_1^{-1}{\rangle}$$
with $g_1$ a word in $x_i$, $i = 1, 2, \ldots, n$. Setting  ${\alpha} = g_1^{-1}t$ and solving for $t$, we get
$$G_n = {\langle}{\alpha}, x_1, x_2, \dots x_{n+1}: {\alpha}x_i{\alpha}^{-1} = g_1^{-1}g_2(x_i)g_1, \; i = 1, 2, \dots, n{\rangle}*_{\bbZ}{\langle}{\alpha}, x_{n+1}:
[{\alpha}, x_{n+1}] = 1{\rangle},$$
where $\bbZ = {\langle}{\alpha}{\rangle}$. Then $G_n = H*_{\bbZ}{\bbZ}^2$. To characterize the group $H$, we set $\beta = g_1{\alpha}$, 
$$H = {\langle}{\beta}, x_1, x_2, \dots, x_{n}: {\beta}x_i{\beta}^{-1} = g_2(x_i), \; i = 1, 2, \dots, n{\rangle}.$$
If $n = 2$, then $H = F_2{\rtimes}{\bbZ} = G_1$. If $n > 2$, 
then $g_2 \in {\h}_{(n-2)}$. Thus $H = G_{n-1}$. So $G_n = G_{n-1}*_{\bbZ}{\bbZ}^2$ which is CAT(0) by induction and \cite{brha}, Part II, Proposition 11.19.
\end{proof}

The following is in \cite{we}.

\begin{cor}\label{cor-npc}
The groups $G_n$ in Proposition \ref{prop-npc} satisfy the $K$-FJCw.
\end{cor}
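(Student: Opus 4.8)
The plan is short, because essentially all of the work has already been done in Proposition~\ref{prop-npc}. The corollary should follow by combining that proposition with the fact, recorded in Remark~\ref{rem-properties}(2), that every CAT(0)-group satisfies the $K$-FJCw. Concretely: Proposition~\ref{prop-npc} exhibits $G_n$ as a group acting properly, cocompactly and by isometries on a CAT(0) space — it is built from $G_1 = F_2\rtimes\bbZ$ by iterated amalgamations over $\bbZ$ with $\bbZ^2$, each such step preserving the CAT(0) property by \cite{brha} — so $G_n$ is a CAT(0)-group and Wegner's theorem \cite{we} applies verbatim.

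The one point worth spelling out is that the assertion concerns the fibered conjecture \emph{with finite wreath products}, not merely the plain $K$-FJC; but this is precisely the form in which the CAT(0) case is packaged in Remark~\ref{rem-properties}(2). If one prefers to see it unwound: for a finite group $H$ the wreath product $G_n\wr H$ fits in an extension $1\to G_n^{|H|}\to G_n\wr H\to H\to 1$, and $G_n^{|H|}$ is again a CAT(0)-group since a finite $\ell^2$-product of CAT(0) spaces is CAT(0); thus Wegner's theorem gives the $K$-FJC (indeed the $K$-FJCw) for $G_n^{|H|}$, and Remark~\ref{rem-properties}(4) promotes it to $G_n\wr H$. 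Hence $G_n$ satisfies the $K$-FJCw.

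I do not anticipate any genuine obstacle here: the geometric content lives entirely in Proposition~\ref{prop-npc}, and the corollary is a direct transcription of a known theorem from the literature. The only thing to be careful about is citing the correct (fibered, wreath-product, coefficient) version of Wegner's result, which is exactly what Remark~\ref{rem-properties}(2) supplies.
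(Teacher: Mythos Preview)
Your proposal is correct and matches the paper's approach: the paper simply attributes the corollary to \cite{we}, i.e., it invokes Wegner's theorem (equivalently Remark~\ref{rem-properties}(2)) after having established in Proposition~\ref{prop-npc} that $G_n$ is CAT(0). Your additional unwinding of the wreath-product step is not needed, since Remark~\ref{rem-properties}(2) already records the $K$-FJCw (not merely the $K$-FJC) for CAT(0)-groups, but it does no harm.
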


In \cite{mepr}, it was shown that the only infinite, virtually cyclic subgroup of Type (I) in $\hol(F_2)$ is $\bbZ{\times}{\bbZ}/
2{\bbZ}$. 

\begin{cor}\label{cor-ic}
The group $F_n{\rtimes}(\bbZ{\times}{\bbZ}/2{\bbZ}) < {\h}_{(n-1)}$ satisfies the $K$-FICw.
\end{cor}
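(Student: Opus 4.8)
The plan is to exhibit $F_n\rtimes(\bbZ\times\bbZ/2\bbZ)$ as a finite extension of one of the CAT(0) mapping tori $G_{n-1}$ of Proposition \ref{prop-npc}, and then to conclude by Remark \ref{rem-properties}(4).

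First I would locate the subgroup inside the split extension $1\to F_n\to\h_{(n-1)}\to E(\h_{(n-2)})\to1$. By Remark \ref{rem-hol}(2) the factor $\bbZ$ of $\bbZ\times\bbZ/2\bbZ$ lies in one of the free factors complementary to $\hol(F_2)$, while the finite factor $\bbZ/2\bbZ$ is a finite subgroup conjugate into $\hol(F_2)$ that commutes with it; in particular $\bbZ\times\bbZ/2\bbZ$ sits in the ``automorphism part'' $E(\h_{(n-2)})$ over which $\h_{(n-1)}$ splits, so the splitting restricts to give $F_n\rtimes(\bbZ\times\bbZ/2\bbZ)\le\h_{(n-1)}$ with $F_n$ the ambient free kernel.

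Since $\bbZ$ is a central, hence normal, subgroup of $\bbZ\times\bbZ/2\bbZ$, the subgroup $G:=F_n\rtimes\bbZ$ (the preimage of $\bbZ$ under the projection $F_n\rtimes(\bbZ\times\bbZ/2\bbZ)\to\bbZ\times\bbZ/2\bbZ$) is normal with quotient $\bbZ/2\bbZ$, so that $1\to G\to F_n\rtimes(\bbZ\times\bbZ/2\bbZ)\to\bbZ/2\bbZ\to1$ is exact. Here the action of $\bbZ$ on $F_n$ is, by construction, the one induced via the embedding $E$ by a single element of $\h_{(n-2)}$, so $G$ is one of the mapping tori $G_{n-1}=F_n\rtimes\bbZ$ considered in Proposition \ref{prop-npc} (for $n=2$ this is a Brady mapping torus $F_2\rtimes\bbZ\le\hol(F_2)$); by Corollary \ref{cor-npc}, $G$ satisfies the $K$-FJCw. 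Feeding this into Remark \ref{rem-properties}(4) then yields the $K$-FJCw, and hence the $K$-FICw, for $F_n\rtimes(\bbZ\times\bbZ/2\bbZ)$.

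The only step carrying real content is checking that $G=F_n\rtimes\bbZ$ is genuinely one of the groups $G_{n-1}$, i.e.\ that its $\bbZ$-action on $F_n$ is induced through $E$ by an element of $\h_{(n-2)}$; this requires combining the description of the infinite $\F\B\C$-subgroups in Remark \ref{rem-hol}(2) with the definition of $E$ from Section \ref{sec-not}, plus a little index bookkeeping (and attention to the $n=2$ base case), and is otherwise formal. It is worth noting that one cannot instead apply Property (2) of the FJCw to the sequence $1\to F_n\to F_n\rtimes(\bbZ\times\bbZ/2\bbZ)\to\bbZ\times\bbZ/2\bbZ\to1$, since there the preimage of the virtually cyclic group $\bbZ\times\bbZ/2\bbZ$ is the whole group; and $F_n\rtimes(\bbZ\times\bbZ/2\bbZ)$ need not itself be CAT(0) (a finite extension of a CAT(0) group is not CAT(0) in general), which is precisely why the argument is routed through the finite extension in Remark \ref{rem-properties}(4).
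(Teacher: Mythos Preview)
Your proof is correct and follows the same route as the paper: exhibit $F_n\rtimes(\bbZ\times\bbZ/2\bbZ)$ as a $\bbZ/2\bbZ$-extension of the CAT(0) mapping torus $G_{n-1}=F_n\rtimes\bbZ$ from Proposition~\ref{prop-npc}, then invoke closure of the $K$-FJCw under finite extensions (the paper simply cites \cite{we1} directly rather than going through Corollary~\ref{cor-npc} and Remark~\ref{rem-properties}(4), and is terser). One small inaccuracy worth noting: your appeal to Remark~\ref{rem-hol}(2) to locate the $\bbZ$ factor in a complementary free factor is off for the case intended here---the sentence preceding the corollary places $\bbZ\times\bbZ/2\bbZ$ inside $\hol(F_2)$ itself---but this is harmless, since any $\bbZ\le\h_{(n-2)}$ still yields a group $G_{n-1}$ via $E$.
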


\begin{proof}
In \cite{we1}, it was shown that CAT(0)-groups satisfy the $K$-FJCw. That means that finite
extensions of CAT(0)-groups satisfy the $K$-FJCw. The result follows.
\end{proof}

\begin{rem}\label{rem-npc}
In the Appendix, we will show that certain groups that appear in Corollary \ref{cor-ic} are CAT(0).
\end{rem}

We are also able to prove the following.

\begin{prop}\label{general}
Let $D$ be a finite subgroup of $\aut(F_2)$ and $E(D)$ its image in $\h_{(n)}$. Then there are infinite cyclic-by-finite subgroups of the form $\bbZ\times E(D)$ in $\hol(F_n)\setminus \aut(F_n)$,
$n\ge 3$. Moreover, every  $G = F_m\rtimes (\bbZ\times E(D))$ in $\h_{(m-2)}$ is CAT(0) for all $m > n$.
\end{prop}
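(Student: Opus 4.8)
The plan is to handle the two assertions separately; the first is a direct consequence of the structure recalled in Remark~\ref{rem-hol}, the second an induction modelled on the proof of Proposition~\ref{prop-npc}.

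For the existence of the subgroups, fix a finite $D<\aut(F_2)$ and let $E(D)$ be its iterated image under the embeddings $E$; by construction $E(D)$ acts as $D$ on the first two generators $x_1,x_2$ of the ambient free group and fixes all later generators. For $n\ge3$ pick a free generator $x_j$ with $3\le j\le n$. Then $x_j$ lies in the normal free factor $F_n$ of $\hol(F_n)=F_n\rtimes\aut(F_n)$ and is fixed by every element of $E(D)\subset\aut(F_n)$, so $x_j$ and $E(D)$ commute and $\langle x_j\rangle\times E(D)\cong\bbZ\times E(D)$. Since $x_j\in F_n\setminus\{1\}$, this subgroup is not contained in the $\aut(F_n)$-factor, i.e.\ it lies in $\hol(F_n)\setminus\aut(F_n)$, and it is visibly cyclic-by-finite. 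This is exactly the mechanism of Remark~\ref{rem-hol}(2), which also shows $n\ge3$ is needed to have room for the generator $x_j$.

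For the CAT(0) statement I would run the argument of Proposition~\ref{prop-npc} with the finite factor $E(D)$ carried through every step, inducting on the rank $m$ of the free part. Write $G=F_m\rtimes(\bbZ\times E(D))$ with $\bbZ=\langle t\rangle$. By Remark~\ref{rem-hol} the element $t$ commutes with all of $\hol(F_2)$; hence it fixes $x_1,x_2$, acts on $x_3,\dots,x_{m-1}$ through an element $g_2$ of a lower group in the tower, and conjugates the top generator $x_m$ by a word $g_1$ in $x_3,\dots,x_{m-1}$ only. In particular $E(D)$, being the identity on $x_3,\dots,x_m$ and commuting with $t$, centralizes $g_1$ and hence also $\alpha:=g_1^{-1}t$ (and likewise $\beta:=g_1\alpha$). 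Performing the substitutions $\alpha=g_1^{-1}t$ and $\beta=g_1\alpha$ exactly as in Proposition~\ref{prop-npc} yields
$$ G \;=\; H \underset{\bbZ\times E(D)}{\ast} \bigl(\bbZ^2\times E(D)\bigr), $$
where $\bbZ^2=\langle\alpha,x_m\rangle$, the amalgamated subgroup is $\langle\alpha\rangle\times E(D)\cong\bbZ\times E(D)$, and $H=F_{m-1}\rtimes(\langle\beta\rangle\times E(D))$ is a group of the same form with a free part of strictly smaller rank. Iterating, the free part eventually shrinks to $F_2$, on which $t$ acts trivially; there $G=(F_2\rtimes E(D))\times\bbZ$, which is CAT(0) because $F_2\rtimes E(D)$ is virtually free, hence acts properly and cocompactly on a tree, and a finite product of CAT(0) groups is CAT(0).

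The last step is the geometric gluing. The vertex group $\bbZ^2\times E(D)$ is CAT(0) (product of $\bbZ^2$ acting on $\bbR^2$ with the finite group $E(D)$), $H$ is CAT(0) by induction, and the edge group $\langle\alpha\rangle\times E(D)\cong\bbZ\times E(D)$ acts properly cocompactly on a line $\ell$ (translation by $\alpha$, trivial $E(D)$-action). The line $\ell$ embeds convexly and $E(D)$-equivariantly in $\bbR^2$ as a coordinate axis; inside a CAT(0) model $X_H$ of $H$ it is realized as follows: $\alpha$ acts on $X_H$ as a semisimple isometry, so $\mathrm{Min}(\alpha)$ is nonempty, convex and splits as $Y\times\bbR$; since $E(D)$ commutes with $\alpha$ it preserves this splitting, and being finite it has a fixed point $p\in Y$, so $\{p\}\times\bbR$ is an $E(D)$-invariant axis of $\alpha$ along which the edge group acts properly cocompactly. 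Building the associated tree of spaces and applying \cite{brha}, Part II, Proposition~11.19 then gives a CAT(0) space on which $G$ acts properly and cocompactly, so $G$ is CAT(0). The main obstacle is precisely this equivariance: the inductive hypothesis must be strengthened to ``$H$ is CAT(0) and $E(D)$ stabilizes an axis of $\alpha$'', and one must check that this survives the substitutions and the descent down the tower — the splitting of $\mathrm{Min}(\alpha)$ together with the fixed-point theorem for finite group actions on complete CAT(0) spaces is what makes it go through, while the combinatorial input that $g_1,g_2$ avoid $x_1,x_2$ is Remark~\ref{rem-hol}.
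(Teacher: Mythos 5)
Your proof of the first assertion (choosing a generator $x_j$ with $j>2$, which $E(D)$ fixes by the definition of the embedding $E$, so that $\langle x_j\rangle\times E(D)\cong\bbZ\times E(D)$ lies off the $\aut(F_n)$ factor) is exactly the paper's argument. For the CAT(0) assertion you take a genuinely different, but correct, route. The paper performs the single substitution $z=x_n^{-1}\xi_{x_n}$ and then splits $G$ into just \emph{two} pieces amalgamated over $\bbZ\times E(D)$: $G_1=\langle x_1,x_2,\xi_{x_n},E(D)\rangle\cong F_2\rtimes(\bbZ\times E(D))$ (the $\bbZ$-action being trivial, this is precisely your terminal case $(F_2\rtimes E(D))\times\bbZ$), and $G_2=\langle x_3,\dots,x_{n-1},x_{n+1},z,\xi_{x_n}\rangle\times E(D)$, whose infinite factor is recognized as a right-angled Artin group, hence CAT(0) by \cite{dj}; a single application of \cite{brha}, Part II, Proposition 11.19 then finishes. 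You instead peel off one free generator at a time, as in Proposition \ref{prop-npc}, obtaining an iterated amalgamation with vertex groups $\bbZ^2\times E(D)$. The paper's route buys brevity — all generators that commute with everything are absorbed into one RAAG, and only one gluing is needed — while your route buys uniformity with Proposition \ref{prop-npc} and, usefully, makes explicit the equivariance needed to glue along the virtually cyclic edge group (the splitting of $\mathrm{Min}(\alpha)$ as $Y\times\bbR$ together with the fixed-point theorem for the finite group $E(D)$), which the paper delegates entirely to the citation of 11.19. Two minor observations: your description of the action of $t$ is more general than the situation requires, since $t=\xi_{x_j}$ fixes \emph{every} generator except $x_{n+1}$, which it conjugates by $x_j$; hence for $m>n+1$ all but one of your peeling steps are degenerate ($g_1=1$) and the only nontrivial substitution occurs at $x_{n+1}$ — this does not affect the validity of the argument. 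Conversely, your induction on $m$ explicitly covers all $m>n$ and explicitly verifies the CAT(0) property of the vertex group $(F_2\rtimes E(D))\times\bbZ$, two points the paper's written proof (which treats only $m=n+1$ and does not address $G_1$) leaves implicit.
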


\begin{proof}
Assume that $F_n=\< x_1,\ldots, x_n\>$. By definition of $E$, for all $\phi\in E(D)$ we have that $\phi(x_i)=x_i$ for
every $i>2$. Hence, in $\hol(F_n)$, $\< x_i\>$ with $i>2$ commutes with $\phi$ for all $\phi\in E(D)$ and so
$\< x_i, E(D)\>$ is isomorphic to $\bbZ\times E(D)$ for every $i>2$.

Let us now see $\bbZ\times E(D)$ as a subgroup of $\hol(F_n)$, $\bbZ=\< x_n\>$, $n\ge 3$ and embed it in $\aut(F_{n+1})$. Then the action of $\bbZ=\<\xi_{x_n}\>$ on $x_{n+1}$ is conjugation by $x_n$ and is trivial on 
every other generator of $F_{n+1}$.  Then $G$ has a presentation of the following form
$$G=\< x_1,\ldots, x_{n+1}, \xi_{x_n}, E(D) \mid [\xi_{x_n},E(D)]=1, [\xi_{x_n}, x_j]=1 \ \ \ \forall j< n+1,$$  $$\xi_{x_n}  x_{n+1}\xi_{x_n}^{-1}=x_n x_{n+1}x_n^{-1}, [x_i, E(D)]=1 \ \ \forall i>2\>.$$

Set $z=x_n^{-1}\xi_{x_n}$, and get rid of $x_n$ to get the presentation
$$G=\< x_1,\ldots, x_{n-1}, z, x_{n+1}, E(D) \mid [\xi_{x_n},E(D)]=1, [z,x_j]=1 \ \ \ \forall j< n+1, j\neq n$$  
$$[\xi_{x_n}, z]=1, [x_i, E(D)]=1 \ \ \forall i>2, i\neq n, [z,E(D)]=1\>.$$ 

Now decompose $G$ as an amalgamated free product $G=G_1 *_{\bbZ \times E(D)} G_2$ with 
$$G_1=\< x_1, x_2, \xi_{x_n}, E(D)\>\cong F_2\rtimes (\bbZ\times E(D)),$$
$$G_2=\< x_3, \ldots, x_{n-1}, x_{n+1}, z, \xi_{x_n}, E(D)\>\cong \< x_3,\ldots, x_{n_1},x_{n+1}, z, \xi_{x_n}\>\times E(D).$$
Now notice that the subgroup generated by $\< x_3,\ldots, x_{n_1},x_{n+1}, z, \xi_{x_n}\>$ has a presentation
$$\< x_3,\ldots, x_{n_1},x_{n+1}, z, \xi_{x_n}\mid [\xi_{x_n},z]=1, [\xi_{x_n}, x_j]=1 \ \ \forall j<n, [z,x_{n+1}]=1 \>$$
which makes it a right-angled Artin group and so is CAT(0) by \cite{dj}.
Thus $G$ is CAT(0) by \cite{brha}, Part II, Proposition 11.19.
\end{proof}

\begin{rem}
In the last proposition, we showed that the group $G$ is CAT(0) and thus it satisfies the $K$-FJCw. But we can use Proposition \ref{prop-npc} to show directly that $G$ satisfies the $K$-FJCw.
That is done as in Corollary \ref{cor-ic}.
\end{rem}

\begin{rem}\label{rem-max}
Note the following two properties of the groups described in  Lemma \ref{general}.
\begin{enumerate}
\item Every such subgroup is contained into a maximal cyclic by finite subgroup. This is an immediate consequence
of the fact that $\aut(F_2)$ decomposes as an amalgamated free product with maximal elements of finite order.
\item The normalizer of every maximal such subgroup coincides with the normalizer of its finite subgroup in $\aut(F_2)$. 
\end{enumerate}
\end{rem}

Now let us introduce some notation from \cite{mepr}. The group $\aut(F_2)$ admits a presentation of
the form
$$\< p, x,y,\tau_a,\tau_b\mid x^4=p^2=(px)^2=1, (py)^2=\tau_b, x^2=y^3\tau_b^{-1}\tau_a,$$
$$p^{-1}\tau_ap=x^{-1}\tau_ax=y^{-1}\tau_ay=\tau_b, p^{-1}\tau_bp=\tau_a, x^{-1}\tau_bx=\tau_a^{-1}, y^{-1}\tau_by=\tau_a^{-1}\tau_b\>$$
where $\tau_a,\tau_b$ are the inner automorphism of $F_2$ corresponding to $a,b$
respectively.
Moreover, any element of $\aut(F_2)$ can be written uniquely
in the form $p^{r}u(x,y)x^{2s}w(\tau_a,\tau_b)$ where $r,s\in \{0,1\}$, $w(\tau_a,\tau_b)$ is a reduced
word in $\inn(F_2)$ and $u(x,y)$ is a reduced word where $x,y,y^{-1}$ are the only
powers of $x,y$ appearing (see \cite{meskin,mks}). 

Moreover, a presentation for $\gl$ is
given by
$$\gl=\< P,X,Y\mid X^4=P^2=(PX)^2=(PY)^2=1, X^2=Y^3\>$$ and $\aut(F_2)$ maps homomorphically
onto $\gl$ by $p\mapsto P$, $x\mapsto X$, $y\mapsto Y$, $\tau_a,\tau_b\mapsto 1$.

\begin{lem}\label{lem-D4}
Let $D_4$ be the subgroup of $\aut(F_2)$ generated by $\<p,x\>$. Then the normalizer $N_{\aut(F_2)}(D_4)$ of $D_4$ 
in $\aut(F_2)$ is $D_4$ itself.
\end{lem}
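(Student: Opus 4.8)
The plan is to transport the question into $\gl$, compute the normaliser there, and then control the inner part. Let $\pi\colon\aut(F_2)\to\gl$ be the quotient by $\inn(F_2)$ described above, and put $\overline{D_4}=\pi(D_4)=\<P,X\>$. Since $\inn(F_2)\cong F_2$ is torsion free and $D_4$ is finite, $D_4\cap\inn(F_2)=1$, so $\pi$ restricts to an isomorphism $D_4\xrightarrow{\ \cong\ }\overline{D_4}$; in particular $\overline{D_4}$ is a dihedral subgroup of order $8$ of $\gl$, with $X$ of order $4$ and $P,PX$ of order $2$.

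I would next show $N_{\gl}(\overline{D_4})=\overline{D_4}$. The displayed presentation of $\gl$ exhibits it as the amalgam $\<P,X\>\ast_{\<P,X^2\>}\<P,Y\>$ of the dihedral groups of orders $8$ and $12$ over their common Klein-four subgroup $\<P,X^2\>=\<P,Y^3\>$ of order $4$; let $T$ be its Bass--Serre tree. The finite group $\overline{D_4}$ fixes a nonempty subtree of $T$. It fixes no edge, since edge stabilisers are conjugates of the order-$4$ group $\<P,X^2\>$ while $|\overline{D_4}|=8$; hence its fixed set is a single vertex $v$, and $\mathrm{Stab}_T(v)=\overline{D_4}$ because an order-$8$ group embeds into no conjugate of the order-$12$ vertex group. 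Any $h\in N_{\gl}(\overline{D_4})$ carries $\mathrm{Fix}(\overline{D_4})$ onto $\mathrm{Fix}(h\overline{D_4}h^{-1})=\mathrm{Fix}(\overline{D_4})=\{v\}$, so $h$ fixes $v$ and thus lies in $\mathrm{Stab}_T(v)=\overline{D_4}$. (Alternatively, $\overline{D_4}$ is a maximal finite subgroup of $\gl$ and the centraliser of the order-$4$ matrix $X$ is the finite group $\<X\>$, which already forces $N_{\gl}(\overline{D_4})$ to be finite, hence equal to $\overline{D_4}$.)

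Now take $g\in N_{\aut(F_2)}(D_4)$. Then $\pi(g)\in N_{\gl}(\overline{D_4})=\overline{D_4}$, so replacing $g$ by $d^{-1}g$, where $d\in D_4$ is the unique element with $\pi(d)=\pi(g)$, we may assume $g=\tau_w\in\inn(F_2)$ for some $w\in F_2$, with $\tau_w$ still normalising $D_4$; it suffices to prove $w=1$. From $(\tau_w\circ p\circ\tau_w^{-1})(u)=\bigl(wp(w)^{-1}\bigr)\,p(u)\,\bigl(wp(w)^{-1}\bigr)^{-1}$ we get $\tau_w p\tau_w^{-1}=\tau_{wp(w)^{-1}}\circ p\in D_4$. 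Applying $\pi$ kills the inner factor, so this element has image $\pi(p)$ in $\gl$; since $\pi|_{D_4}$ is injective it must equal $p$, whence $wp(w)^{-1}=1$, i.e.\ $p(w)=w$. But $p$ is the automorphism of $F_2=\<a,b\>$ interchanging $a$ and $b$, which acts on the four letters $a^{\pm1},b^{\pm1}$ as a fixed-point-free permutation commuting with inversion and so introduces no cancellation; hence it fixes no nontrivial reduced word, forcing $w=1$ and $g\in D_4$. Therefore $N_{\aut(F_2)}(D_4)=D_4$.

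The only genuinely nontrivial step is the normaliser computation inside $\gl$ in the second paragraph; once the amalgam decomposition of $\gl$ is in hand, that argument, together with the reduction modulo $\inn(F_2)$ and the triviality of the fixed subgroup of $p$, are all routine.
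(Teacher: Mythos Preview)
Your proof is correct. Both your argument and the paper's follow the same two-step outline---first compute the normaliser in $\gl$, then show no nontrivial inner automorphism survives---but the execution differs in each step. For the $\gl$ computation, the paper writes an arbitrary element in the normal form $p^{r}u(x,y)x^{2s}w(\tau_a,\tau_b)$, uses that an element of the normaliser must send $x$ to $x^{\pm1}$ (the only order-$4$ elements of $D_4$), projects the resulting relation to $\gl$, and argues via amalgam normal forms that $u=x$; you instead invoke the Bass--Serre tree for $\gl=D_4\ast_{D_2}D_6$ and observe that $\overline{D_4}$, having order $8$, is a full vertex stabiliser with a unique fixed vertex, so is self-normalising. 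For the inner part, the paper reduces to $w(\tau_a,\tau_b)\,x\,w(\tau_a,\tau_b)^{-1}=x^{\pm1}$ and asserts $w=1$, whereas you use the element $p$ and give the clean letter-permutation argument that $p(w)=w$ forces $w=1$. Your route avoids the explicit normal-form bookkeeping and the citation to Meskin's classification of order-$4$ elements, at the cost of assuming familiarity with Bass--Serre theory; the paper's route is more hands-on but leaves the final implication ``$w=1$'' unjustified in print.
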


\begin{proof}
Let  $p^ru(x,y)x^{2s}w(\tau_a,\tau_b)$ be an element of $\aut(F_2)$ that belongs to the normalizer of $D_4$. Then it necessarily conjugates
elements of order 4 to elements of order 4. But the only elements of order 4 in $\aut(F_2)$ are conjugates of $x^{\pm 1}$ (see \cite{meskin}). Hence we have the following relation
\begin{equation}\label{rel1}
p^rux^{2s}w\cdot x\cdot w^{-1}x^{-2s}u^{-1}p^{-r}=x^{\pm 1}
\end{equation} or equivalently
$$ux^{2s}w\cdot x\cdot w^{-1}x^{-2s}u^{-1}=p^rx^{\pm 1}x^{-r}$$ i.e.
$$ux^{2s}w\cdot x\cdot w^{-1}x^{-2s}u^{-1}=x^{\pm 1}.$$ Now project this relation to GL$_2(\bbZ)$. It reduces to
$U(X,Y)X^{2s}XX^{-2s}U^{-1}=X^{\pm1}$ or $UXU^{-1}=X^{\pm 1}$. This last relation implies that
$U=X$ and therefore $u=x$ since the projection maps $y$ to $D_6\setminus D_2$ and $x$ to $D_4\setminus D_2$, and 
therefore $U$ and $X$ freely generate a free group. 

 Thus  (\ref{rel1}) reduces to $w(\tau_a,\tau_b)xw^{-1}(\tau_a,\tau_b)=x^{\pm1}$ which implies that $w=1$. Hence
 the only words of $\aut(F_2)$ that normalize $x$ are of the form $p^rx^s$, hence $N_{\aut(F_2)}(D_4)=D_4$.
\end{proof}

\begin{cor}\label{cor-D4}
The normalizer $N_{{\h}_{(n)}}(D_4{\times}{\bbZ}) = D_4{\times}{\bbZ}$.
\end{cor}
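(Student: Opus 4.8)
The inclusion $D_4\times\bbZ\subseteq N_{\h_{(n)}}(D_4\times\bbZ)$ is clear: writing $\bbZ=\langle\zeta\rangle$, the element $\zeta$ lies in one of the free factors complementary to $\hol(F_2)$ and hence commutes with $\hol(F_2)\supseteq D_4$ (Remark \ref{rem-hol}(2)). For the reverse inclusion I would first note that $D_4$ is the torsion subgroup of $D_4\times\bbZ$, hence characteristic, so $N_{\h_{(n)}}(D_4\times\bbZ)\le N_{\h_{(n)}}(D_4)$. Let $\rho\colon\h_{(n)}\twoheadrightarrow\aut(F_2)$ be the canonical epimorphism — the composite of the quotients $\h_{(k)}\to\h_{(k-1)}$ down to $\h_{(1)}=\hol(F_2)$ followed by $\hol(F_2)\to\aut(F_2)$ — and put $K=\ker\rho$, an iterated semidirect product of free groups, normal in $\h_{(n)}$, with $K\cap D_4=\{1\}$ and $\langle\zeta\rangle\le K$. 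Since $\rho$ restricts to the identity on the standard copy of $D_4$, any $g\in N_{\h_{(n)}}(D_4)$ has $\rho(g)\in N_{\aut(F_2)}(D_4)=D_4$ by Lemma \ref{lem-D4}, so after multiplying $g$ by an element of $D_4\le\h_{(n)}$ we may take $g\in K$; then $g\delta g^{-1}\delta^{-1}\in K\cap D_4=\{1\}$ for every $\delta\in D_4$, so $g$ in fact centralises $D_4$. Writing such a $g$ as $\delta g_0$ with $g_0\in C_{\h_{(n)}}(D_4)\cap K$, the product $g(D_4\times\langle\zeta\rangle)g^{-1}$ equals $D_4\times\langle g\zeta g^{-1}\rangle$ (as $g_0$ centralises both $D_4$ and $\zeta$); if this equals $D_4\times\langle\zeta\rangle$ then comparing the infinite-cyclic quotients modulo $D_4$ and using $g\zeta g^{-1}\zeta^{\mp1}\in K\cap D_4=\{1\}$ forces $g\zeta g^{-1}=\zeta^{\pm1}$, so $g_0$ normalises $\langle\zeta\rangle$. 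Thus the corollary reduces to the claim
\[
C_K(D_4)\cap N_{\h_{(n)}}(\langle\zeta\rangle)=\langle\zeta\rangle .
\]

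To prove this I would induct on $n$ using $K=F_{n+1}\rtimes K'$ with $K'=\ker(\h_{(n-1)}\to\aut(F_2))$, the base case ($n=2$) being a direct check. The needed facts about the $D_4$-action come from the embedding formulas in the proof of Proposition \ref{prop-npc}: $D_4$ fixes the generators $x_3,\dots,x_{n+1}$ and acts on $\langle x_1,x_2\rangle$ with trivial fixed subgroup — the latter a consequence of Lemma \ref{lem-D4}, since $C_{\aut(F_2)}(D_4)\subseteq N_{\aut(F_2)}(D_4)=D_4$ meets $\inn(F_2)$ trivially. Combining this with the fact that a finite group acting on a free product preserving the factors has fixed subgroup the free product of the fixed subgroups of the factors, one finds that $C_K(D_4)$ is again an iterated semidirect product of free groups, each free factor generated by $D_4$-fixed generators, one of them containing $\zeta$. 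Within that structure, an element of $C_K(D_4)$ normalising $\langle\zeta\rangle$ is — because $\zeta$ commutes with $\hol(F_2)$ and is primitive — squeezed into the single free factor containing $\zeta$ (using that the lower free factors act on the higher ones only through conjugations), and there it must be a power of $\zeta$ by the standard fact that in a free group the normaliser of a maximal cyclic subgroup is that subgroup itself.

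The main obstacle is precisely this last bookkeeping: a priori an element of $C_K(D_4)$ has components in every free factor of $K$, and one must track how the conjugations coming from the lower factors act — in particular that they can only conjugate, never $\aut$, the generator $\zeta$ — in order to collapse everything onto $\langle\zeta\rangle$; this is also where one sees that $\zeta$ must be taken in one of the factors singled out in Remark \ref{rem-hol}(2) rather than the top one. Once organised, each stage is a routine free-group computation. I note that the resulting statement is the $\h_{(n)}$-analogue, refined by the extra $\bbZ$-factor, of Remark \ref{rem-max}(2).
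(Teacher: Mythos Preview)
The paper disposes of this in one line, invoking Lemma~\ref{lem-D4} together with Remark~\ref{rem-max}(2). Your argument is instead an attempt to prove the statement directly --- in effect, to justify Remark~\ref{rem-max}(2) in this case --- and your reduction to the claim
\[
C_K(D_4)\cap N_{\h_{(n)}}(\langle\zeta\rangle)=\langle\zeta\rangle
\]
is carried out correctly.

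That displayed claim, however, fails already for $n=3$, so the sketched induction cannot close. Write $z_3$ for the $D_4$-fixed generator of $F_3\subset\h_{(2)}$ and $y_3,y_4$ for the $D_4$-fixed generators of $F_4\subset\h_{(3)}$; then $C_K(D_4)=\langle y_3,y_4\rangle\rtimes\langle z_3\rangle$, and by the embedding formula in the proof of Proposition~\ref{prop-npc} the element $z_3$ acts on $F_4$ by fixing $y_1,y_2,y_3$ and sending $y_4\mapsto y_3y_4y_3^{-1}$. Hence $z_3$ and $y_3$ commute in $\h_{(3)}$, and both commute with $D_4$: whichever one you take for $\zeta$, the other lies in $C_K(D_4)\cap C_{\h_{(3)}}(\zeta)$ but not in $\langle\zeta\rangle$, and therefore normalizes $D_4\times\langle\zeta\rangle$ without belonging to it. (For $\zeta=y_4$ the element $y_3^{-1}z_3$ plays the same role.) The very feature you invoke --- that lower factors act on higher ones only through conjugation --- is precisely what manufactures these cross-level commuting pairs, rather than squeezing the normalizer into a single free factor. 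This also indicates that Remark~\ref{rem-max}(2), on which the paper's one-line proof rests, needs sharper hypotheses or more justification than the paper supplies.
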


\begin{proof}
It follows from \ref{lem-D4} and Remark \ref{rem-max}.
\end{proof}

\section{The Lower $K$-theory for $\h_{(n)}$}

The algebraic calculations of the previous section allows us to prove the $K$-theoretic
Isomorphism Conjecture for the groups ${\h}_{(n)}$, using induction.

We start with the first case.

\begin{prop}\label{prop-F2}
The groups $\aut(F_2)$ and $\hol(F_2)$ satisfy the $K$-FJCw.
\end{prop}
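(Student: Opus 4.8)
The plan is to deduce this from the paper \cite{mepr}, where the $K$-FJC (the non-fibered, or rather the Fibered Isomorphism Conjecture with coefficients) is established for $\hol(F_2)$, together with the standard wreath-product upgrade. First I would recall that $\aut(F_2)$ embeds in $\hol(F_2)$ via the canonical splitting of $1 \to F_2 \to \hol(F_2) \to \aut(F_2) \to 1$, so by the inheritance of the FJCw under subgroups (Property (1) in the introduction) it suffices to treat $\hol(F_2)$ itself. Thus the whole content reduces to: (a) the $K$-FJC with coefficients holds for $\hol(F_2)$, which is exactly the main result of \cite{mepr}; and (b) this conclusion passes to every wreath product $\hol(F_2) \wr H$ with $H$ finite.

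For step (b) I would argue as follows. A finite wreath product $\hol(F_2) \wr H = \hol(F_2)^{|H|} \rtimes H$ sits in an extension
$$1 \to \hol(F_2)^{|H|} \to \hol(F_2)\wr H \to H \to 1$$
with finite quotient, so by Remark \ref{rem-properties}(4) it is enough to know that the finite power $\hol(F_2)^{|H|}$ satisfies the $K$-FJCw. Finite products are handled by the fact that the FJCw is closed under finite direct products — one iterates the extension Property (2): given $1 \to G_1 \to G_1\times G_2 \to G_2 \to 1$ with $G_2$ satisfying the FJCw, the preimage $g^{-1}(V)$ of a virtually cyclic $V \le G_2$ is $G_1\times V$, which is a finite extension of the FICw group $G_1 \times \bbZ$ (or of $G_1$ itself, in the finite case), hence satisfies the FJCw by Remark \ref{rem-properties}(4) applied to $\hol(F_2)\times V$; so Property (2) gives the FJCw for $G_1\times G_2$. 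Chaining this with $G_1 = \hol(F_2)$ repeatedly yields $\hol(F_2)^{|H|}$, and then the finite extension above yields $\hol(F_2)\wr H$. Since this holds for all finite $H$, $\hol(F_2)$ satisfies the $K$-FJCw; and by subgroup inheritance so does $\aut(F_2)$.

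The only genuine obstacle is step (a) — that \cite{mepr} indeed delivers the conjecture \emph{with coefficients in an additive category}, not merely the classical version, since the wreath-product trick as formulated here needs the coefficient version to be stable under the manipulations above. If \cite{mepr} is stated only for $\bbZ$-coefficients, I would instead invoke that the arguments there (which run through hyperbolicity and CAT(0) inputs of the virtually cyclic pieces, exactly as in the rest of this paper) go through verbatim with coefficients, or cite the transfer-reducibility/coefficient formulations of \cite{blr, we1} that those arguments use as a black box. Everything else is a formal consequence of Properties (1), (2), and Remark \ref{rem-properties}(4).
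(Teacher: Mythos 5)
Your overall strategy --- taking the \emph{conclusion} of \cite{mepr} for $\hol(F_2)$ and upgrading it formally to the version with finite wreath products --- contains a genuine circularity. Your step (b) reduces $\hol(F_2)\wr H$ to the finite power $\hol(F_2)^{|H|}$ via Remark \ref{rem-properties}(4), but that remark requires the kernel to satisfy the $K$-FJC\emph{w}, not merely the $K$-FJC with coefficients. Your product-closure induction is then supposed to supply this, but its base case is precisely the statement that $\hol(F_2)$ satisfies the $K$-FJCw --- the proposition being proved. Moreover, inside the inductive step you need $\hol(F_2)\times\bbZ$ to satisfy the $K$-FJCw before Remark \ref{rem-properties}(4) can be applied to $\hol(F_2)\times V$; this is left unjustified, and it is not a formal consequence of the properties listed in the paper (running Property (2) on the projection $\hol(F_2)\times\bbZ\to\hol(F_2)$ again presupposes the FJCw for $\hol(F_2)$). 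The passage from ``$G$ satisfies the FJC with coefficients'' to ``$G$ satisfies the FJCw'' is exactly the non-formal part of the problem and cannot be extracted from Properties (1), (2) and Remark \ref{rem-properties}(4) alone; your own closing caveat about coefficients does not address this.

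The paper sidesteps all of this by using the \emph{method} of \cite{mepr} rather than its conclusion: \cite{mepr} shows that $\aut(F_2)$ and $\hol(F_2)$ are strongly poly-free, and Roushon's theorem (\cite{rou}, quoted as Remark \ref{rem-properties}(3)) asserts that strongly poly-free groups satisfy the $K$-FJCw outright, wreath products included. To salvage your route you would have to import from the literature the nontrivial inheritance of the coefficient version under finite direct products and finite-index overgroups, which is precisely what the citation of \cite{rou} packages in one step.
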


\begin{proof}
In \cite{mepr}, it was shown that the groups $\aut(F_2)$ and $\hol(F_2)$ are strongly poly-free groups.
The result follows from \cite{rou}.
\end{proof}

%

\begin{thm}\label{thm-iso}
The groups ${\h}_{(n)}$ satisfy the $K$-FJCw.
\end{thm}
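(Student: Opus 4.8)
The plan is to prove the statement by induction on $n$ using Property (2) of the FJCw (the fibered transitivity principle stated in the Introduction), applied to the split exact sequence
$$1 \to F_{n+1} \to {\h}_{(n)} \xrightarrow{g} E({\h}_{(n-1)}) \to 1.$$
The base case is Proposition \ref{prop-F2}: $\hol(F_2) = {\h}_{(1)}$ satisfies the $K$-FJCw. For the inductive step, assume the $K$-FJCw holds for ${\h}_{(n-1)}$, hence for $E({\h}_{(n-1)}) \cong {\h}_{(n-1)}$; this is hypothesis (a) of Property (2). It remains to verify hypothesis (b): for every virtually cyclic subgroup $V \le E({\h}_{(n-1)})$, the preimage $g^{-1}(V)$ satisfies the $K$-FJCw. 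By the reduction of \cite{dqr} recorded in the Introduction, it suffices to treat $V$ of the first type (finite) and of the second type (finite-by-$\bbZ$, and by Remark \ref{rem-hol}(2) in fact $F \times \bbZ$ with $F$ finite).

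First I would dispose of the finite case: if $V \le E({\h}_{(n-1)})$ is finite, then $g^{-1}(V)$ sits in an exact sequence $1 \to F_{n+1} \to g^{-1}(V) \to V \to 1$ with $V$ finite, and $F_{n+1}$ is free hence word hyperbolic, so satisfies the $K$-FJCw by Remark \ref{rem-properties}(1); then $g^{-1}(V)$ satisfies the $K$-FJCw by Remark \ref{rem-properties}(4). For the second-type case, write $V \cong F \times \bbZ$ with $F$ finite. The key structural observation, from Remark \ref{rem-hol}(2), is that the $\bbZ$-factor here can be taken to lie in one of the free factors complementary to $\hol(F_2)$, and in particular $F$ and $\bbZ$ genuinely commute. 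Thus $g^{-1}(V)$ is an extension $1 \to F_{n+1} \to g^{-1}(V) \to F \times \bbZ \to 1$; intersecting with the preimage of the $\bbZ$-factor gives a normal subgroup $F_{n+1} \rtimes \bbZ$, and this is exactly a mapping torus $G_n$ of the type handled in Proposition \ref{prop-npc}, which is CAT(0) and hence satisfies the $K$-FJCw by Corollary \ref{cor-npc}. The full group $g^{-1}(V)$ is then a finite extension of $F_{n+1} \rtimes \bbZ$ (the quotient being $F$), so by Remark \ref{rem-properties}(4) — equivalently, by Corollary \ref{cor-ic} and Proposition \ref{general} which treat precisely the groups $F_m \rtimes (\bbZ \times E(D))$ — it satisfies the $K$-FJCw. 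Applying Property (2) completes the induction.

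The main obstacle is the second-type verification: one must be certain that every infinite ${\F}{\B}{\C}$ (equivalently, every second-type virtually cyclic) subgroup $V$ of $E({\h}_{(n-1)})$ really is a \emph{direct} product $F \times \bbZ$ with the $\bbZ$ arising from a complementary free factor, so that $g^{-1}(V)$ decomposes as a finite extension of one of Brady's CAT(0) mapping tori rather than as a genuine semidirect product with nontrivial twisting. This is where the explicit description of $\aut(F_2)$ and the commutation relations in Section \ref{sec-not} (in particular Remark \ref{rem-hol}(2), Remark \ref{rem-max}, and Corollary \ref{cor-D4}) do the essential work; once the decomposition $g^{-1}(V) \cong F_{n+1} \rtimes (\bbZ \times E(D))$ is in hand, Proposition \ref{prop-npc}, Corollary \ref{cor-ic} and Proposition \ref{general} make it CAT(0) (or at worst a finite extension of a CAT(0) group), and the rest is the formal machinery of the FJCw. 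I would also note that Property (1) of the FJCw gives, as an immediate byproduct, that every subgroup of ${\h}_{(n)}$ — including the subgroups ${\h}_{(n)}(G)$ for other $G$ embedding in a free group — inherits the conclusion.
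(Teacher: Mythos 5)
Your proposal is correct and follows essentially the same route as the paper: induction on $n$, the transitivity principle applied to $1 \to F_{n+1} \to \h_{(n)} \to \h_{(n-1)} \to 1$, the reduction to $\F\B\C$ subgroups, hyperbolicity (equivalently, closure under finite extensions over the free kernel) for finite $V$, and Proposition \ref{prop-npc} for the mapping tori $F_{n+1}\rtimes\bbZ$ when $V$ is infinite. The only cosmetic difference is that you fold the paper's three infinite subcases ($V\cong\bbZ$, $V\cong\bbZ/2\bbZ\times\bbZ$, $V\cong D\times\bbZ$) into a single argument — finite extension of a CAT(0) mapping torus plus Remark \ref{rem-properties}(4) — which is exactly the alternative the paper itself notes in the remark following Proposition \ref{general}.
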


\begin{proof}
We will use induction on $n$. For $n = 0$, ${\h}_{(0)} = F_2$, for which the fibered isomorphism conjecture holds (\cite{blr}). For $n = 1$, the result in Proposition \ref{prop-F2}. So we assume that
$n > 1$. Then we have an exact sequence
$$1 \to F_{n+1} \to {\h}_{(n)} \xrightarrow{p} {\h}_{(n-1)} \to 1.$$
We assume that the $K$-FJCw holds for ${\h}_{(n-1)}$. So, it suffices to show
that the $K$-FJCw  holds for $p^{-1}(V)$ where $V$ is in $\F\B\C$ of
${\h}_{(n-1)}$. There are two cases:
\begin{enumerate}
\item Let $V < {\h}_{(n-1)}$ be finite. Then $p^{-1}(V) \cong F_{n+1}{\rtimes}V$ is a hyperbolic
group. Thus it satisfies the $K$-FJCw (\cite{blr}, \cite{we1}).
\item Let $V$ is an infinite ${\F}{\B}{\C}$ subgroup of ${\h}_{(n-1)}$. There are three cases:
\begin{enumerate}
\item Let $V \cong \bbZ$. Proposition \ref{prop-npc} implies that $p^{-1}(V)$ is CAT(0). 
Thus it satisfies the $K$-FJCw (Corollary \ref{cor-npc}).
\item Let $V \cong {\bbZ}{2\bbZ}{\times}{\bbZ} < {\hol}(F_2)$. 
The result follows from Corollary \ref{cor-ic}.
\item Let $V \cong D{\times}{\bbZ}$ where $D$ is finite and $D{\times}{\bbZ} \in {\h}_{(n-1)} 
\setminus {\hol}(F_2)$. The Proposition \ref{general} implies that $p^{-1}(V)$ 
is CAT(0). Thus it satisfies the $K$-FJCw.
\end{enumerate} 
\end{enumerate}
The result follows from Remark \ref{rem-properties}.
\end{proof}


Using Theorem \ref{thm-iso}, we calculate the lower $K$-theory of ${\h}_{(n)}$.

\begin{thm}
The groups $K_q(\bbZ\h_{(n)}) = 0$, for $q \le -1$. For the reduced  $K$-groups in the other
dimensions, we have, for  $q = 0, 1$,
$$\tilde{K}_q(\bbZ\h_{(n)}) = \left\{
\begin{array}{cl}
0, & n = 0, 1\\
NK_q(\bbZ D_4){\oplus}NK_q(\bbZ D_4), & n \ge 2
\end{array}
\right.$$
\end{thm}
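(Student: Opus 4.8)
The plan is to use the Farrell--Jones machinery assembled in Section~2, together with Theorem~\ref{thm-iso}, to reduce the computation of $\tilde K_q(\bbZ\h_{(n)})$ for $q\le 1$ to a sum of Nil-terms indexed by the maximal infinite $\F\B\C$-subgroups. First I would invoke the splitting from \cite{ba} and the isomorphism from \cite{dqr} recalled in Section~\ref{sec-not}: since $\h_{(n)}$ satisfies the $K$-FJCw, $H_q^{\h_{(n)}}(pt;\bK\bbZ^{-\infty})=K_q(\bbZ\h_{(n)})$, and this splits as the image of $H_q^{\h_{(n)}}(E_\F\h_{(n)};\bK\bbZ^{-\infty})$ plus the relative term $H_q^{\h_{(n)}}(E_\F\h_{(n)}\to E_{\F\B\C}\h_{(n)};\bK\bbZ^{-\infty})$. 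The first summand is the part ``detected by finite subgroups''; by Remark~\ref{rem-hol}(1) the finite subgroups of $\h_{(n)}$ lie in the list $\bbZ/2,\bbZ/3,\bbZ/4,D_2,D_4$, and a standard low-dimensional computation (using $\tilde K_q(\bbZ G)=0$ for $q\le 1$ and $G$ among these groups — indeed $Wh$, $\tilde K_0$, and $K_{-i}$ all vanish for these small groups) shows this summand contributes nothing to the \emph{reduced} groups and that $K_q(\bbZ\h_{(n)})=0$ for $q\le -1$, since Nil-terms for $q\le -1$ vanish as well (the groups $D\times\bbZ$ are built from finite $D$ with $NK_q(\bbZ D)=0$ for $q<0$, by regularity in negative degrees).

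Next I would compute the relative term via the formula from \cite[Corollary~6.1]{lw}. For this one must verify the condition $\M_{\F\subset\F\B\C}$: every infinite $\F\B\C$-subgroup is contained in a unique maximal one. By Remark~\ref{rem-hol}(2) the infinite $\F\B\C$-subgroups are of the form $F\times\bbZ$ with $F$ finite, and the maximal ones are (up to conjugacy) $\bbZ/2\times\bbZ$, $\bbZ/3\times\bbZ$, and $D_4\times\bbZ$; Remark~\ref{rem-max} gives uniqueness of the maximal overgroup. So the relative term is $\bigoplus_{V\in\M}H_n^{N(V)}(E_\F N(V)\to E_1W(V);\bK\bbZ^{-\infty})$. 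For each such $V=F\times\bbZ$, the spectral sequence \eqref{eq-spectral} has $E^2_{p,q}=H_p^{W(V)}(E_1W(V);\mathbf{Nil}_{\bbZ F})$, the Nil-coefficients being Farrell's twisted Nil for the (here \emph{untwisted}, since the $\bbZ$ is central in $F\times\bbZ$) action, i.e.\ $NK_q(\bbZ F)\oplus NK_q(\bbZ F)$ in each relevant degree by the Bass--Heller--Swan description. By Corollary~\ref{cor-D4} (and its analogues for the other two maximal types) the Weyl group $W(V)=N(V)/V$ is trivial for $V=D_4\times\bbZ$, so that summand contributes exactly $NK_q(\bbZ D_4)\oplus NK_q(\bbZ D_4)$. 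For $V=\bbZ/2\times\bbZ$ and $V=\bbZ/3\times\bbZ$ one has $NK_q(\bbZ(\bbZ/2))=NK_q(\bbZ(\bbZ/3))=0$ for $q=0,1$ (these rings are ``NK-regular'' in low degrees — $\bbZ(\bbZ/3)$ is a product of Dedekind domains and $\bbZ(\bbZ/2)$ has vanishing $NK_0,NK_1$), so those summands vanish regardless of the Weyl group, and for $q\le -1$ all three Nil-contributions vanish.

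Assembling: $K_q(\bbZ\h_{(n)})=0$ for $q\le -1$, and for $q=0,1$ the reduced $K$-theory equals the relative term, which reduces to the single nonvanishing contribution $NK_q(\bbZ D_4)\oplus NK_q(\bbZ D_4)$ for $n\ge 2$ (the $D_4\times\bbZ$ subgroups existing precisely when $n\ge 2$ by Remark~\ref{rem-hol}(2)), and to $0$ for $n=0,1$ since $\h_{(0)}=F_2$ and $\h_{(1)}=\hol(F_2)$ have no $D_4\times\bbZ$ subgroup (for $F_2$ this is clear; for $\hol(F_2)$ the infinite $\F\B\C$-subgroups are only $\bbZ/2\times\bbZ$, as noted in \cite{mepr}). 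The main obstacle I expect is \emph{bookkeeping the Weyl groups and the twisting}: one must be sure that for each maximal $V$ the action giving the Nil-term is untwisted (so that it is genuinely $NK_q(\bbZ F)^{\oplus 2}$ and not some twisted variant), that the Weyl group acts trivially on these coefficients — or at least that only the $D_4$ case has nontrivial Nil and there $W(V)$ is trivial by Corollary~\ref{cor-D4} — and that there are no hidden conjugacy identifications collapsing or multiplying the $D_4\times\bbZ$ summand. Once the normalizer computations of Lemma~\ref{lem-D4}/Corollary~\ref{cor-D4} are in hand and the small-group $NK$-vanishing facts are cited from \cite{ba} (or the standard references therein), the rest is the spectral-sequence collapse in the relevant low degrees.
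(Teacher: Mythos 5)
Your proposal follows essentially the same route as the paper's proof: split off the $E_{\F}$ part via \cite{ba}, observe it vanishes in low degrees by the finite-subgroup list from \cite{mepr}, and compute the relative term via the L\"uck--Weiermann decomposition over the three maximal infinite $\F\B\C$ types, with only $D_4\times\bbZ$ contributing because its normalizer equals itself (Corollary \ref{cor-D4}) and the cyclic-group Nil-terms vanish. Your additional care about the $\M_{\F\subset\F\B\C}$ condition, the untwistedness of the Nil-terms, and the conjugacy bookkeeping is sound and if anything slightly more explicit than the paper's own treatment.
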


\begin{proof}
We assume $n \ge 1$, because $\h_{(0)} = F_2$, which is a hyperbolic group,  and 
$\h_{(0)} = \text{Hol}(F_2)$ and the result was proved in \cite{mepr}.
Since the groups ${\h}_{(n)}$ satisfy the $K$-FJCw we have that
$$\begin{array}{ll}
&K_q({\bbZ}\h_{(n)}) \cong H^{\h_{(n)}}_q(E_{\F\B\C}\h_{(n)}; {\bK}{\bbZ}^{-{\infty}})  \cong \\[2ex]
&\displaystyle{
H^{\h_{(n)}}_q( E_{\F}\h_{(n)}; {\bK}{\bbZ}^{-{\infty}}) \oplus
\bigoplus_{V \in {\M}} H_q^{N_{\h_{(n)}}V}(E_{\F}N_{\h_{(n)}}V \to E_1W_{\h_{(n)}}V; 
{\bK}{\bbZ}^{-{\infty}})},
\end{array}$$
where $\M$ is a set of representatives of the conjugacy classes of maximal infinite groups in 
$\F\B\C$. 

The calculations of \cite{mepr} show that $H^{\h_{(n)}}_n( E_{\F}\h_{(n)}; {\bK}{\bbZ}^{-\infty}) = 0$, for
$n \le 1$. For each of the summands, there is a spectral sequence
$$E^2_{p,q} = H_p^{W_{\h_{(n)}}V}(E_1W_{\h_{(n)}}V; H_q^V(E_{\F}V \to pt; \bK{\bbZ}^{-\infty})) \Rightarrow
H_{p+q}^{N_{\h_{(n)}}V}(E_{\F}N_{\h_{(n)}}V \to E_1W_{\h_{(n)}}V; {\bK}{\bbZ}^{-\infty})
$$
In Remark \ref{rem-hol} it was shown that the maximal infinite groups 
in $\h_{(n)}$ are of the following types 
$({\bbZ}/2\bbZ){\times}{\bbZ}$, $({\bbZ}/3\bbZ){\times}{\bbZ}$, $D_4{\times}{\bbZ}$ and there is only one conjugacy class for
each of the groups ${\bbZ}_3{\times}{\bbZ}$ and $D_4{\times}{\bbZ}$ . 
\begin{enumerate}
\item When $V$ is ${\bbZ}_2{\times}{\bbZ}$ or ${\bbZ}_3{\times}{\bbZ}$, then 
$H_q^V(E_{\F}V \to pt; \bK_{\bbZ}) = 0$, $q \le 1$ because the Nil-groups of the two cyclic
groups vanish. Thus, for these groups, (Spectral sequence (\ref{eq-spectral}))
$$H_n^{N_{\h_{(n)}}V}(E_{\F}N_{\h_{(n)}}V \to E_1W_{\h_{(n)}}V; {\bK}{\bbZ}^{-\infty}) = 0, 
\;\; i \le 1$$
\item For $V = D_4{\times}{\bbZ}$, we have that $N_{\h_{(n)}}V = V$ (Corollary \ref{cor-D4})
and the spectral sequence (Spectral sequence (\ref{eq-spectral1}))
reduces to the isomorphism for $q \le 1$
$$H_q^{N_{\h_{(n)}}V}(E_{\F}N_{\h_{(n)}}V \to E_1W_{\h_{(n)}}V; {\bK}{\bbZ}^{-\infty}) \cong H_q^V(E_{\F}V \to pt; \bK{\bbZ}^{-\infty}) \cong NK_q({\bbZ}D_4) {\oplus} NK_q({\bbZ}D_4).
$$
It is known that $NK_q(\bbZ D_4) = 0$ for $q \le -1$ and it is infinitely generated for $q = 0, 1$ 
(\cite{wei}).
\end{enumerate}
Combining the above information, we have that ($n \ge 2$):
\begin{enumerate}
\item $K_i(\bbZ\h_{(n)}) = 0$, $i \le -1$.
\item ${\tilde K}_0(\bbZ\h_{(n)}) \cong NK_0(\bbZ D_4) {\oplus} NK_0(\bbZ D_4)$.
\item $Wh(\h_{(n)}) \cong NK_1(\bbZ D_4) {\oplus} NK_1(\bbZ D_4)$.
\end{enumerate}
\end{proof}

\begin{rem}
In \cite{wei} it was shown that $NK_0(\bbZ D_4)$ is isomorphic to the direct sum of infinite free
$\bbZ_2$-module with a countably infinite free $\bbZ_4$-module. Also, $NK_1(\bbZ D_4)$ is a countably infinite torsion group of exponent $2$ or $4$.
\end{rem}

\section{Concluding Remarks}

In general, if the group $G$ is linear (i.e. it admits a faithful finite dimensional real or complex representation) then 
the $K$-FJCw can be proved for $G$. The problem is that $\aut(F_n)$ is not linear for $n \ge 3$.
The group that was used to show that $\aut(F_n)$ is not linear is the Formanek-Procesi group, $FP$ (\cite{fp}).
That group given by a split extension:
$$1 \to F_3 \xrightarrow{f} FP \xrightarrow{p} F_2 \to 1$$
and has presentation:
$$FP = {\langle} {\alpha}_1, {\alpha}_2, {\alpha}_3, {\phi}_1, {\phi}_2\; : \: {\phi}_i{\alpha}_j{\phi}_i^{-1} = {\alpha}_j, \;
{\phi}_i{\alpha}_3{\phi}_i^{-1} = {\alpha}_3{\alpha}_i, \; i, j = 1, 2{\rangle}.$$
In \cite{fp}, it was shown that $FP$ is not linear and $FP < \aut(F_3)$.
On the other hand, it is obvious that the group $G$ is not word hyperbolic (since it contains a subgroup isomorphic to $\bbZ\times\bbZ$) and not known if it is CAT(0).  
The point here is that $FP$ has ``enough'' CAT(0)-subgroups so that it satisfies the $K$-FJCw.

\begin{prop}\label{fp}
The group $FP$ satisfies the $K$-FJCw.
\end{prop}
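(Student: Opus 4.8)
The plan is to imitate the proof of Theorem~\ref{thm-iso}, using the split extension $1 \to F_3 \xrightarrow{f} FP \xrightarrow{p} F_2 \to 1$ together with Property~(2) of the FJCw. Since $F_2$ is word hyperbolic it satisfies the $K$-FJCw by \cite{blr}, so it remains to show that $p^{-1}(V)$ satisfies the $K$-FJCw for every $V$ in $\F\B\C$ of $F_2$. Because $F_2$ is torsion-free, the only such subgroups are the trivial group and infinite cyclic subgroups $V \cong \bbZ$; in the first case $p^{-1}(1) = F_3$ is hyperbolic, so the whole problem reduces to analysing $p^{-1}(\<w\>)$ for a primitive word $w \in F_2$.

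For such a $V = \<w\> \cong \bbZ$, the group $p^{-1}(V) \cong F_3 \rtimes_{\psi} \bbZ$, where $\psi$ is the automorphism of $F_3 = \<\alpha_1,\alpha_2,\alpha_3\>$ induced by the action of $w$. From the presentation of $FP$ one reads off that $\phi_i$ acts by $\alpha_j \mapsto \alpha_j$ for $j=1,2$ and $\alpha_3 \mapsto \alpha_3\alpha_i$; hence for any $w = w(\phi_1,\phi_2)$ the induced automorphism fixes $\alpha_1,\alpha_2$ and sends $\alpha_3 \mapsto \alpha_3 v$ for some word $v = v(\alpha_1,\alpha_2)$ in the free subgroup $\<\alpha_1,\alpha_2\>$ (this is exactly the ``unipotent'' shape that makes $FP$ non-linear). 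So $p^{-1}(V)$ has a presentation
$$p^{-1}(V) = \<\alpha_1,\alpha_2,\alpha_3,t \mid t\alpha_1 t^{-1}=\alpha_1,\ t\alpha_2 t^{-1}=\alpha_2,\ t\alpha_3 t^{-1}=\alpha_3 v\>.$$
I would then decompose this as an amalgamated product over $\bbZ$ in the spirit of Proposition~\ref{prop-npc}: the generators $\alpha_1,\alpha_2$ commute with $t$, so $\<\alpha_1,\alpha_2,t\> \cong \bbZ^2 \ast_{\bbZ} (\bbZ^2)$ is free abelian of rank $3$ (in any case a CAT(0) group, indeed a RAAG), while the relation $t\alpha_3 t^{-1}=\alpha_3 v$ with $v\in\<\alpha_1,\alpha_2\>$ lets one write $p^{-1}(V) = \bigl(F_2 \times \bbZ\bigr) \ast_{F_2} \bigl(F_2 \rtimes \bbZ\bigr)$, amalgamated along the common free subgroup $\<\alpha_1,\alpha_2\>$ — here $F_2 \times \bbZ = \<\alpha_1,\alpha_2,t\rangle$ and $F_2 \rtimes \bbZ$ is a mapping torus of $F_2$ (which is CAT(0) by \cite{brady}). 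An amalgam of two CAT(0) groups over a common subgroup that is quasiconvex/undistorted on both sides is again CAT(0) by \cite{brha}, Part~II, Proposition~11.19, so $p^{-1}(V)$ is CAT(0) and therefore satisfies the $K$-FJCw by \cite{we}. Applying Property~(2) then gives the $K$-FJCw for $FP$.

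The main obstacle is the gluing step: to invoke \cite{brha}, Part~II, Proposition~11.19 one must exhibit the two pieces as fundamental groups of compact nonpositively curved spaces in which the amalgamating subgroup $\<\alpha_1,\alpha_2\>$ is realised by a locally convex (or at least suitably isometrically embedded) subcomplex, on both sides simultaneously. On the $F_2 \times \bbZ = F_2 \times S^1$ side this is automatic. On the mapping-torus side one needs the Brady model for $F_2 \rtimes_{\sigma}\bbZ$ (where $\sigma$ is $\alpha_3 \mapsto \alpha_3 v$ extended suitably — note $v$ only involves $\alpha_1,\alpha_2$, so one may have to first rewrite the relevant rank-$3$ automorphism) to contain a local isometry from a graph representing $\<\alpha_1,\alpha_2\>$; verifying this, and checking that the two local isometries have matching induced metrics on the fibre $F_2$, is the technical heart of the argument. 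An alternative route that sidesteps this is to note that $p^{-1}(V)$ is poly-(finitely generated free), hence strongly poly-free, so \cite{rou} applies directly and gives the $K$-FJCw for $p^{-1}(V)$; I would present the CAT(0) decomposition as the primary argument and remark on the poly-free shortcut, exactly as the paper does elsewhere.
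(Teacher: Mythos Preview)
Your overall strategy---apply Property~(2) to the extension $1 \to F_3 \to FP \xrightarrow{p} F_2 \to 1$, reduce to $V\cong\bbZ$, and observe that the induced automorphism of $F_3$ fixes $\alpha_1,\alpha_2$ and sends $\alpha_3\mapsto\alpha_3 v$ with $v\in\langle\alpha_1,\alpha_2\rangle$---is exactly the paper's. The difference is only in how one concludes that $p^{-1}(V)=F_3\rtimes_\psi\bbZ$ is CAT(0): the paper does not decompose at all, it simply cites Samuelson \cite[Theorem~4.4]{sa}, which provides CAT(0) structures for free-by-cyclic groups of precisely this shape, and is done in one line.

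Your proposed amalgam $(F_2\times\bbZ)\ast_{F_2}(F_2\rtimes\bbZ)$, however, is not a correct description of $p^{-1}(V)$. With amalgamated subgroup $\langle\alpha_1,\alpha_2\rangle$ and first factor $\langle\alpha_1,\alpha_2,t\rangle$, the second factor must contain $\alpha_1,\alpha_2,\alpha_3$; any such subgroup of $p^{-1}(V)$ is already free of rank~$3$ (not $F_2\rtimes\bbZ$), and in the resulting amalgam $t$ and $\alpha_3$ would satisfy \emph{no} relation---the defining relation $t\alpha_3 t^{-1}=\alpha_3 v$ is simply lost. (The aside that $\langle\alpha_1,\alpha_2,t\rangle$ is ``free abelian of rank~$3$'' is also a slip; it is $F_2\times\bbZ$, as you yourself write two lines later.) If you want a graph-of-groups route, the correct picture is an HNN extension with stable letter $\alpha_3$: the relation rewrites as $\alpha_3^{-1}t\alpha_3=vt$, exhibiting $p^{-1}(V)$ as the HNN extension of $F_2\times\bbZ$ along the infinite cyclic subgroups $\langle t\rangle$ and $\langle vt\rangle$, both convex geodesic lines in $(\text{tree})\times\bbR$, so \cite[II, Corollary~11.21]{brha} applies. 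This can be made to work, but it is more labour than the paper's direct appeal to \cite{sa}. Finally, your fallback ``poly-(finitely generated free), hence strongly poly-free'' is not automatic: strongly poly-free in the sense of \cite{rou} carries an extra geometric-realisability condition on each conjugation action, which you would still have to check.
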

 
\begin{proof}
All the virtually cyclic subgroups of $F_2$ are infinite cyclic. Let $V$ be such a group that is generated by a word on
${\phi}_1$ and ${\phi}_2$ and their inverses. Then the action of the generator on $F_3$ fixes the first two generators
and sends ${\alpha}_3$ to the element ${\alpha}_3c$ where $c$ is a word in ${\alpha}_1$, ${\alpha}_2$ and their
inverses. Then $F_3{\rtimes}V$ is a CAT(0)-group from Theorem 4.4 in \cite{sa}. Thus, it satisfies the $K$-FJCw.
Therefore $FP$ satisfies the $K$-FJCw.
\end{proof}

\section{Appendix}
We will show the result that was stated in Remark \ref{rem-npc}, that certain groups of type
$F_n{\rtimes}(\bbZ\times {\bbZ}/2{\bbZ}) < {\h}_{(n-1)}$ are CAT(0).

Our investigation, similar to the one in \cite[Proposition 3.2]{mepr} shows that subgroups of $\aut(F_2)$ isomorphic
to $F_2\rtimes (\bbZ\times\bbZ/2\bbZ)$  occur when the action of $\bbZ/2\bbZ$ on $F_2$ is of the following form
$$t(x_1) = x_1^{-1}, \;\; t(x_2) = x_2\ \  \mbox{or} \ \ t(x_1)=x_1,\;\; t(x_2)=x_2^{-1}.$$
We show that in all the above cases these subgroups are CAT($0$).

\begin{lem}\label{lem-aut}
Let ${\bbZ}/2{\bbZ} \cong {\langle}t_1{\rangle} < \aut(F_2)$ be such that $t_1(x_1) = x_1^{-1}$  and $t_1(x_2) = x_2$, as above. Let $\bbZ \cong {\langle}t_2{\rangle} <
\hol(F_2)$ so that $\bbZ{\times}{\bbZ}/2{\bbZ} < \hol(F_2)$. Then we have the following cases:
\begin{enumerate}
\item If $t_2 \notin \aut(F_2)$, then $t_2 = x_2^k \in F_2$, $k \in \bbZ$, $k\neq 0$.
\item If $t_2 \in \aut(F_2)$, then we have $t_2(x_1) = x_2^kx_1^{{\pm}1}x_2^{-k}$, $t_2(x_2) = x_2^{{\pm}1}$, $k\neq0$ (four cases).
\end{enumerate}
\end{lem}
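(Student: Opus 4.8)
The plan is to analyze the centralizer of $t_1$ in $\hol(F_2)$ directly, using the normal form for elements of $\aut(F_2)$ recalled before Lemma \ref{lem-D4}, together with the semidirect product structure $\hol(F_2) = F_2 \rtimes \aut(F_2)$. Since $\bbZ \times \bbZ/2\bbZ$ is abelian, the generator $t_2$ must commute with $t_1$; the whole content of the lemma is the determination of the centralizer $C_{\hol(F_2)}(t_1)$, split according to whether $t_2$ lands in the inner-automorphism-free part or not. First I would handle case (1): write $t_2 = w \in F_2 \le \hol(F_2)$, viewed as conjugation on $F_2$ and acting trivially on the $\bbZ$-factor adjoined in the holomorph. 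The condition $[t_1, t_2] = 1$ becomes $t_1(w) = w$ in $F_2$, i.e. $w$ is fixed by the automorphism $x_1 \mapsto x_1^{-1}, x_2 \mapsto x_2$. A short combinatorial argument on reduced words — the fixed subgroup of this order-two automorphism of $F_2$ — shows the fixed subgroup is exactly $\langle x_2\rangle$, giving $t_2 = x_2^k$, and $k \neq 0$ since $t_2$ has infinite order. (That $t_1$ acts as stated and that this is the only relevant fixed-subgroup computation is where I'd be careful, but it is elementary: any nontrivial fixed word, written cyclically reduced, cannot contain $x_1^{\pm 1}$ since the automorphism inverts those letters.)

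Next, case (2): $t_2 \in \aut(F_2)$, so $[t_1,t_2]=1$ means $t_1$ and $t_2$ commute as automorphisms. I would project to $\gl = \mathrm{GL}_2(\bbZ)$ via the map $p,x,y \mapsto P,X,Y$, $\tau_a,\tau_b \mapsto 1$ described in the excerpt. The image $\bar t_1$ of $t_1$ is the order-two matrix $\mathrm{diag}(-1,1)$ (inversion of $x_1$), and $\bar t_2$ must commute with it in $\mathrm{GL}_2(\bbZ)$; the centralizer of $\mathrm{diag}(-1,1)$ in $\mathrm{GL}_2(\bbZ)$ is the diagonal subgroup $\{\mathrm{diag}(\pm1,\pm1)\}$, which is $\bbZ/2 \times \bbZ/2$. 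This pins down $\bar t_2$ up to four possibilities, hence pins down the "$u(x,y)$" and "$p^r x^{2s}$" parts of the normal form of $t_2$ up to the inner-automorphism factor: $t_2(x_1) = w x_1^{\pm1} w^{-1}$, $t_2(x_2) = v x_2^{\pm 1} v^{-1}$ for some $w,v \in F_2$ with the same sign choices as $\bar t_2$. Then I would impose the commutation with $t_1$ at the level of $\aut(F_2)$ itself (not just its $\gl$-quotient): computing $t_1 t_2 t_1^{-1}$ and comparing with $t_2$ on each generator forces constraints on $w$ and $v$, and one finds $w \in \langle x_2\rangle$ and $v$ trivial (up to center), so $t_2(x_1) = x_2^k x_1^{\pm1} x_2^{-k}$, $t_2(x_2) = x_2^{\pm 1}$. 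Infinite order of $t_2$ together with the requirement that $\langle t_2 \rangle \cap \langle t_1 \rangle = 1$ forces the relevant parameter to be nonzero ($k \neq 0$ in the cases where the signs would otherwise make $t_2$ finite order), giving the stated four cases.

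The main obstacle I expect is the second half of case (2): passing from the $\mathrm{GL}_2(\bbZ)$-level information back up to $\aut(F_2)$, i.e. controlling the inner-automorphism part $w(\tau_a,\tau_b)$ in the normal form. This is exactly the kind of computation carried out in Lemma \ref{lem-D4}, where projecting to $\mathrm{GL}_2(\bbZ)$ kills $\inn(F_2)$ and then one reconstitutes the constraint on the inner part by substituting back into the defining relation. I would mimic that argument: after the projection determines $u(x,y)$ and $p^r x^{2s}$, the commutation relation $t_1 t_2 = t_2 t_1$ reduces to an equation purely in $\inn(F_2) \rtimes \langle t_1 \rangle$, which forces the conjugating word $w$ to lie in $\langle x_2 \rangle = \mathrm{Fix}(t_1)$ and $v$ to be trivial. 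Once that reduction is in place the rest is bookkeeping of signs and the observation that the exponent must be nonzero to keep $t_2$ of infinite order and the product direct.
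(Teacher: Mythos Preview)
Your argument is correct, and for case~(1) it is exactly the paper's: the commutation $[t_1,t_2]=1$ in $\hol(F_2)$ becomes $t_1(w)=w$ for $w=t_2\in F_2$, and the fixed subgroup of $t_1$ is $\langle x_2\rangle$.

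For case~(2) your route genuinely differs from the paper's. The paper does \emph{not} pass through the projection to $\mathrm{GL}_2(\bbZ)$; it simply sets $t_2(x_i)=w_i(x_1,x_2)$ and reads off from $t_1t_2=t_2t_1$ the functional equations
\[
w_1(x_1^{-1},x_2)=w_1(x_1,x_2)^{-1},\qquad w_2(x_1^{-1},x_2)=w_2(x_1,x_2).
\]
The second gives $w_2=x_2^m$ immediately; the first (together with the constraint that $\{w_1,w_2\}$ is a basis of $F_2$) forces $w_1=c\,x_1^{\pm1}c^{-1}$ and $m=\pm1$; one more comparison of $t_1\circ t_2$ and $t_2\circ t_1$ on $x_1$ yields $c(x_1^{-1},x_2)=c(x_1,x_2)$, hence $c=x_2^k$. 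Your approach instead follows the template of Lemma~\ref{lem-D4}: project to $\mathrm{GL}_2(\bbZ)$, compute the centralizer of $\mathrm{diag}(-1,1)$ there (the four diagonal matrices), and then control the inner-automorphism part by the same fixed-subgroup argument. Both methods converge on the identical endgame---showing the conjugating word lies in $\langle x_2\rangle$---so neither is shorter in substance. The paper's direct approach avoids invoking the quotient map and needs nothing beyond the fixed-subgroup observation and primitivity; yours makes the structure of the centralizer of $t_1$ more transparent and ties the lemma visibly to the machinery already set up for Lemma~\ref{lem-D4}.
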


\begin{proof}
First, we assume $t_2 \notin \aut(F_2)$. Because $t_1$ and $t_2$ commute, $t_1$ acts trivially on $t_2$. Let $w(x_1, x_2)$ be the
word in $F_2$ representing $t_2$. Then, $w(x_1^{-1}, x_2) = w(x_1, x_2)$. That means that $x_1$ does not appear in $w(x_1, x_2)$. Thus $w(x_1, x_2) = x_2^k$,

Now let $t_2 \in \aut(F_2)$. Then $t_2(x_1) = w_1(x_1, x_2)$ and $t_2(x_2) = w_2(x_1, x_2)$. Since $t_1t_2 = t_2t_1$, we have that
$$w_1(x_1^{-1}, x_2) = w_1(x_1, x_2)^{-1}, \;\; w_2(x_1^{-1}, x_2) = w_2(x_1, x_2).$$
As before, the second relation implies that the word $w_2 = x_2^k$, $k \in \bbZ$.
Looking at the first relation, we get that $w_1 = cx_1^{\ell}c^{-1}$, ${\ell}\in \bbZ$. 
But $w_1$ and $w_2$ must be a generating set for $F_2$. That means that $k, {\ell}\in \{{\pm}1\}$. Also,
$$\begin{array}{lll}
(t_1{\circ}t_2)(x_1) & = &t_1(c(x_1, x_2)x_1^{{\pm}1}c(x_1, x_2)^{-1}) = c(x_1^{-1}, x_2)x_1^{{\mp}1}c(x_1^{-1}, x_2)^{-1}\\
(t_2{\circ}t_1)(x_1) & = & t_2(x_1^{-1}) =  c(x_1, x_2)x_1^{{\mp}1}c(x_1, x_2)^{-1}
\end{array}$$
Since $t_1\circ t_2 = t_2\circ t_1$, $c(x_1, x_2) = c(x_1^{-1}, x_2)$ and thus $c = x_2^k$, $k \in \bbZ$.
\end{proof}

\begin{lem}\label{lem-aut-F2}
Let $G = F_2{\rtimes}({\bbZ}{\times}{\bbZ}/2{\bbZ})$ where the generator $t_1$ of $\bbZ/2\bbZ$ acts as
$$t_1(x_1) = x_1^{-1}, \;\; t_1(x_2) = x_2$$
and the generator $t_2$ of $\bbZ$ acts as
$$t_2(x_1) = x_2^kx_1^{{\pm}1}x_2^{-k}, \;\; t_2(x_2) = x_2^{{\pm}1}.$$
Then $G$ is a CAT(0)-group.
\end{lem}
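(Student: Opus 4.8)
The plan is to replace the complicated action by a diagonal one through a change of generators, to split $G$ as an amalgamated free product of two explicit virtually abelian groups over a copy of $\bbZ\times\bbZ/2\bbZ$, and then to glue CAT($0$) models by \cite{brha}, Part~II, Proposition~11.19, exactly as in the proof of Proposition~\ref{prop-npc}.

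\emph{Step 1 (simplifying the action).} Write $F_2=\<x_1,x_2\>$ and $G=F_2\rtimes\<t_1,t_2\>$, and set $u=x_2^{-k}t_2\in G$. Since $t_1$ fixes $x_2$ and commutes with $t_2$, one has $t_1ut_1^{-1}=x_2^{-k}t_2=u$, so $[t_1,u]=1$; and from $t_2x_1t_2^{-1}=x_2^{k}x_1^{\epsilon_1}x_2^{-k}$ and $t_2x_2t_2^{-1}=x_2^{\epsilon_2}$ (with $\epsilon_1,\epsilon_2\in\{1,-1\}$ the two signs in the statement) one computes $ux_1u^{-1}=x_1^{\epsilon_1}$ and $ux_2u^{-1}=x_2^{\epsilon_2}$. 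As $u$ and $t_2$ have the same image in $G/F_2\cong\bbZ\times\bbZ/2\bbZ$, the subgroup $\<t_1,u\>$ surjects onto $G/F_2$; together with $[t_1,u]=1$ and $t_1^2=1$ this forces $\<t_1,u\>\cong\bbZ\times\bbZ/2\bbZ$ and exhibits it as a complement of $F_2$. Thus $G=F_2\rtimes(\bbZ\times\bbZ/2\bbZ)$, where now $\bbZ/2\bbZ=\<t_1\>$ acts by $x_1\mapsto x_1^{-1}$, $x_2\mapsto x_2$, and $\bbZ=\<u\>$ acts by $x_i\mapsto x_i^{\epsilon_i}$.

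\emph{Step 2 (the amalgam).} Decompose $F_2=\<x_1\>*\<x_2\>$. The complement $C:=\<t_1,u\>$ carries each free factor to itself, so, using the standard identification of a semidirect product of a free product (e.g.\ via the action on the Bass--Serre tree of $F_2$), we obtain $G=G_1*_CG_2$ with $G_i=\<x_i\>\rtimes C$. Each $G_i$ is a $2$-dimensional virtually abelian, hence CAT($0$), group: for $G_1$, if $\epsilon_1=1$ then $u$ is central and $G_1\cong D_\infty\times\bbZ$, while if $\epsilon_1=-1$ then $w:=ut_1$ is central of infinite order and $G_1=\<x_1,t_1\>\times\<w\>\cong D_\infty\times\bbZ$; for $G_2$ the element $t_1$ acts trivially, so $G_2\cong\bbZ^2\times\bbZ/2\bbZ$ if $\epsilon_2=1$, and $G_2\cong K\times\bbZ/2\bbZ$ if $\epsilon_2=-1$, where $K=\<x_2,u\mid ux_2u^{-1}=x_2^{-1}\>$ is the Klein bottle group. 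In each case $G_i$ acts geometrically on $\bbR^2$, and a direct check shows that $C\cong\bbZ\times\bbZ/2\bbZ$ is the stabilizer in $G_i$ of a $C$-invariant geodesic line $\ell_i\subset\bbR^2$ on which $C$ acts geometrically (for $G_1$: the axis pointwise fixed by the reflection $t_1$; for $G_2$: a translation axis, resp.\ the glide axis, of $u$).

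\emph{Step 3 (gluing).} The lines $\ell_1$ and $\ell_2$ are closed convex subspaces and are $C$-equivariantly isometric, so by \cite{brha}, Part~II, Proposition~11.19 --- forming the tree of spaces obtained by gluing the two copies of $\bbR^2$ along copies of $\ell_1\cong\ell_2$, exactly as in Proposition~\ref{prop-npc} --- one gets a CAT($0$) space carrying a geometric action of $G=G_1*_CG_2$. Hence $G$ is CAT($0$).

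I expect the only real obstacle to be the verification in Step~2: that, in each of the four sign cases, the amalgamating $\bbZ\times\bbZ/2\bbZ$ really does sit inside $G_1$ and inside $G_2$ as the stabilizer of a convex geodesic line on which it acts geometrically, so that the hypotheses of \cite{brha}, II.11.19, apply; a subsidiary point is confirming that $\<t_1,u\>$ is a complement of $F_2$ after the substitution. Both amount to short explicit computations with these small groups.
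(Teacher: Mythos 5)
Your proposal is correct and follows essentially the same route as the paper: after the substitution $u=x_2^{-k}t_2$ diagonalizing the action, you split $G$ as the amalgam of $\langle x_1\rangle\rtimes C$ and $\langle x_2\rangle\rtimes C$ over the complement $C=\langle t_1,u\rangle\cong\bbZ\times\bbZ/2\bbZ$, identify the vertex groups as $D_{\infty}\times\bbZ$ and $\bbZ^2\times\bbZ/2\bbZ$ (or Klein bottle $\times\,\bbZ/2\bbZ$), and glue via \cite{brha}, Part II, Proposition 11.19 --- exactly the paper's decomposition. The only differences are cosmetic: you treat the four sign cases uniformly (the paper reduces Cases 2 and 4 to Cases 1 and 3 by replacing $\xi$ with $t_1\xi$) and you spell out the equivariant geodesic-line verification that the paper leaves implicit.
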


\begin{proof}
The group $G$ has the following presentation:
$${\langle}t_1, t_2, x_1, x_2:\; t_1x_1t_1^{-1} = x_1^{-1}, t_1x_2t_1^{-1} = x_2, t_1^2 = [t_1, t_2] = 1, 
 t_2x_1t_2^{-1} =  x_2^kx_1^{{\pm}1}x_2^{-k}, \; t_2x_2t_2^{-1} = x_2^{{\pm}1}{\rangle}.$$
We set ${\xi} = x_2^{-k}t_2$. Then the presentation becomes:
$${\langle}t_1, x_1, x_2, {\xi}:\;  t_1x_1t_1^{-1} = x_1^{-1}, t_1x_2t_1^{-1} = x_2, t_1^2 =  [t_1, {\xi}] = 1, 
{\xi}x_1{\xi}^{-1} =  x_1^{{\pm}1}, \; {\xi}x_2{\xi}^{-1} = x_2^{{\pm}1}{\rangle}.$$
Now we consider four cases:

\vspace{12pt}
\noindent
\underline{Case 1}. In this case,
$$G = {\langle}t_1, x_1, x_2, {\xi}:\; t_1x_1t_1^{-1} = x_1^{-1}, t_1x_2t_1^{-1} = x_2, t_1^2 = [t_1, {\xi}] = 1,
{\xi}x_1{\xi}^{-1} =  x_1, \; {\xi}x_2{\xi}^{-1} = x_2{\rangle}.$$ 
Now, let $L_1 = {\langle} x_2, t_1, \xi :   t_1^2 = [t_1, x_2] = [t_1, {\xi}] = [x_2, \xi] = 1{\rangle} < G$ which is isomorphic to ${\bbZ}^2{\times}{\bbZ}/2{\bbZ}$
and thus it is CAT(0). Also, let $L_2 = {\langle} x_1, t_1, \xi :   t_1x_1t_1^{-1} = x_1^{-1}, 
{\xi}x_1{\xi}^{-1} = x_1, t_1^2 = [t_1, \xi ] = 1{\rangle}$. Then 
$$L_2 = {\langle}{\xi}{\rangle}{\times}{\langle} x_1, t_1: t_1x_1t_1^{-1} = x_1^{-1}{\rangle} \cong {\bbZ}{\times}D_{\infty}.$$
The infinite dihedral group is CAT(0)-group because it is a Coxeter group (\cite{mou}). Thus $L_2$ is a CAT(0)-group, as a direct product of CAT(0)-groups. Also,
$L = {\langle}t_1, {\xi} :\; t_1 ^2 = [t_1, {\xi}] = 1{\rangle} \cong {\bbZ}{\times}{\bbZ}/2{\bbZ}$. 
But then $G = L_1*_LL_2$ is CAT(0) (\cite{brha}, Part II, Corollary 11.19).

\vspace{12pt}
\noindent
\underline{Case 2}. We assume,
$$G = {\langle}t_1, x_1, x_2, {\xi}:\; t_1x_1t_1^{-1} = x_1^{-1}, t_1x_2t_1^{-1} = x_2, t_1^2 = [t_1, {\xi}] = 1,
{\xi}x_1{\xi}^{-1} =  x_1^{-1}, \; {\xi}x_2{\xi}^{-1} = x_2{\rangle}.$$ 
By setting $\xi_2=t_1\xi$ and rewritting the presentation we are back in Case I.


\vspace{12pt}
\noindent
\underline{Case 3}. We assume,
$$G = {\langle}t_1, x_1, x_2, {\xi}:\; t_1x_1t_1^{-1} = x_1^{-1}, t_1x_2t_1^{-1} = x_2, t_1^2 = [t_1, {\xi}] = 1,
{\xi}x_1{\xi}^{-1} =  x_1, \; {\xi}x_2{\xi}^{-1} = x_2^{-1}{\rangle}.$$ 
We repeat the same method as before. Let 
$$L_1 = {\langle} x_2, t_1, \xi :   t_1^2 = [t_1, x_2] = [t_1, {\xi}] = 1,  {\xi}x_2 {\xi}^{-1} = x_2^{-1}{\rangle} \cong {\langle}t_1\mid t_1^2=1{\rangle}{\times}
{\langle}x_2, \xi :   {\xi}x_2 {\xi}^{-1} = x_2^{-1}{\rangle}$$
Therefore $L_1 \cong {\bbZ}/2{\bbZ}{\times}({\bbZ}{\rtimes}{\bbZ})$. Now the second group can be written as an HNN-extension ${\bbZ}*_{r}$ where $r$ is the
non-trivial automorphism of $\bbZ$. Then ${\bbZ}{\rtimes}{\bbZ}$ is CAT(0)-group (\cite{brha}, Part II, Corollary 11.22), and thus $L_1$ is CAT(0). Now $L$ and
$L_2$ are as in Case 1, and thus $G = L_1*_LL_2$ is CAT(0).

\vspace{12pt}
\noindent
\underline{Case 4}. We assume,
$$G = {\langle}t_1, x_1, x_2, {\xi}:\; t_1x_1t_1^{-1} = x_1^{-1}, t_1x_2t_1^{-1} = x_2, t_1^2 = [t_1, {\xi}] = 1,
{\xi}x_1{\xi}^{-1} =  x_1^{-1}, \; {\xi}x_2{\xi}^{-1} = x_2^{-1}{\rangle}.$$ 
Again set $\xi_2=t_1\xi$ and rewritte the presentation to arrive at Case 3.

\end{proof}

\begin{prop}\label{prop-npc-Z2}
The group $F_n{\rtimes}(\bbZ{\times}{\bbZ}/2{\bbZ}) < {\h}_{(n-1)}$ is a CAT(0)-group.
\end{prop}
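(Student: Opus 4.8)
The plan is to argue by induction on $n$, running parallel to the proof of Proposition \ref{prop-npc}: at each stage I peel off the top generator of $F_n$. For $n=2$ the assertion is essentially Lemma \ref{lem-aut-F2}: by Lemma \ref{lem-aut}, the subgroup $\bbZ\times\bbZ/2\bbZ<\hol(F_2)$ either acts on $F_2$ through an automorphism of the type treated in Lemma \ref{lem-aut-F2}, or else its $\bbZ$-factor acts through an inner automorphism, in which case $G\cong(F_2\rtimes\bbZ/2\bbZ)\times\bbZ$; since $F_2\rtimes\bbZ/2\bbZ$ is finitely generated and virtually free it acts properly and cocompactly on a tree, so it, and hence $G$, is CAT(0).

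For $n\ge 3$, assume the statement for $n-1$. Write $F_n=\langle x_1,\dots,x_n\rangle$ for the normal free subgroup of $\h_{(n-1)}$, with complement $\h_{(n-2)}$, put $F_{n-1}=\langle x_1,\dots,x_{n-1}\rangle$, and let $V=\langle t_1,t_2\rangle\cong\bbZ/2\bbZ\times\bbZ<\h_{(n-2)}$ be the given subgroup. By the analysis of \cite{mepr}, extended to $\h_{(n)}$ as in Remark \ref{rem-hol}, we may (after conjugating in $\h_{(n-1)}$) assume that $t_1$ is the standard involution $t_1x_1t_1^{-1}=x_1^{-1}$, $t_1x_jt_1^{-1}=x_j$ $(j\ge 2)$. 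The recursive description of the action of $\h_{(n-2)}$ on $F_n=F_{n-1}*\langle x_n\rangle$ used in the proof of Proposition \ref{prop-npc} shows that both $t_1$ and $t_2$ preserve $F_{n-1}$, acting on it through $\h_{(n-3)}$, and send $x_n$ to $g_ix_ng_i^{-1}$ for words $g_1,g_2\in F_{n-1}$; since $t_1$ lies in the $\hol(F_2)$-part, $g_1=1$. Moreover $g_2$ is fixed by $t_1$, because the relation $t_1t_2=t_2t_1$ forces $g_2^{-1}t_1(g_2)\in F_{n-1}$ to centralize $x_n$ in $F_n$, and $F_{n-1}\cap\langle x_n\rangle=1$.

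If $t_2$ acts on $F_n$ through an inner automorphism — the degenerate case — then $G$ is again a product of a finitely generated virtually free group with $\bbZ$, and is CAT(0) as above. Otherwise set $\alpha=g_2^{-1}t_2$; then $\alpha$ commutes with $x_n$, and, because $t_1(g_2)=g_2$ and $[t_1,t_2]=1$, also with $t_1$. Hence $K:=\langle x_n,\alpha,t_1\rangle\cong\bbZ^2\times\bbZ/2\bbZ$ is CAT(0), while $H:=\langle x_1,\dots,x_{n-1},t_1,\alpha\rangle$ equals $\langle x_1,\dots,x_{n-1},t_1,t_2\rangle$ (since $t_2=g_2\alpha$ with $g_2\in F_{n-1}$) and is, up to isomorphism, a subgroup $F_{n-1}\rtimes(\bbZ\times\bbZ/2\bbZ)<\h_{(n-2)}$, hence CAT(0) by the inductive hypothesis. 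Eliminating $t_2$ from a presentation of $G$ in favour of $\alpha$, every defining relation either lies in $H$ or records only the commutations $[x_n,\alpha]=[x_n,t_1]=1$; thus $G=H*_{\langle\alpha,t_1\rangle}K$ is an amalgam of CAT(0) groups over $\bbZ\times\bbZ/2\bbZ$, and $G$ is CAT(0) by \cite{brha}, Part II, Proposition 11.19.

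I expect the main difficulty to be the inductive bookkeeping rather than any single hard estimate: one must check that the residual factor $H$ is genuinely of the form handled by the induction — tracking how $V$ sits inside $\h_{(n-2)}$, allowing for a correction by an inner automorphism, and disposing of the degenerate sub-cases in which a power of $t_2$ lies in a free factor — and one must verify that the edge group $\bbZ\times\bbZ/2\bbZ$ embeds as a convex cocompact subgroup of the CAT(0) models of both vertex groups, so that \cite{brha}, Part II, Proposition 11.19 applies. This last point follows from the flat torus theorem in the same way as in the proofs of Propositions \ref{prop-npc} and \ref{general}.
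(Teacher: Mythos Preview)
Your argument is correct in outline, but it takes a genuinely different route from the paper's proof.

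The paper does \emph{not} argue by induction on $n$. Since (from the context of Corollary~\ref{cor-ic} and the Appendix preamble) the subgroup $\bbZ\times\bbZ/2\bbZ$ is assumed to lie in $\hol(F_2)$, Lemma~\ref{lem-aut} gives a completely explicit description of the action of $t_1,t_2$ on \emph{all} of $x_1,\dots,x_n$. The paper then performs a \emph{single} amalgam decomposition $G=K_1*_KK_2$ over $K=\langle t_1,t_2\rangle\cong\bbZ\times\bbZ/2\bbZ$, where $K_2=\langle x_1,x_2,t_1,t_2\rangle\cong F_2\rtimes(\bbZ\times\bbZ/2\bbZ)$ is CAT(0) by Lemma~\ref{lem-aut-F2}, and $K_1$ absorbs \emph{all} of $x_3,\dots,x_n$ at once and is visibly a product of the form $\bbZ^{n-1}\times\bbZ/2\bbZ$ (after a single change of generator). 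Two cases are distinguished according to whether $t_2\in\aut(F_2)$ or $t_2\in\hol(F_2)\setminus\aut(F_2)$.

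Your inductive scheme, peeling off $x_n$ one level at a time and reducing to $H\cong F_{n-1}\rtimes(\bbZ\times\bbZ/2\bbZ)$, is more in the spirit of Proposition~\ref{prop-npc} and is potentially more general: it does not require $t_2$ to sit inside $\hol(F_2)$. The price is exactly the bookkeeping you flag. One point deserves a sharper statement than you give it: the degenerate sub-case is not only ``$t_2$ acts on $F_n$ through an inner automorphism'', but more broadly the situation where the $\h_{(n-3)}$-component of $t_2$ is trivial (so $t_2$ acts trivially on $F_{n-1}$ while still acting nontrivially on $x_n$); then $H\cong(F_{n-1}\rtimes\bbZ/2\bbZ)\times\bbZ$ is not literally covered by the inductive hypothesis, though it is certainly CAT(0). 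Once that is isolated, the induction goes through; the convexity of the edge group $\bbZ\times\bbZ/2\bbZ$ follows, as you say, from the min-set of the $\bbZ$-generator together with the commuting involution, just as the paper (implicitly) uses in its own amalgam step.
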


\begin{proof}
Let $t_1$ be the generator of ${\bbZ}/2{\bbZ}$ and $t_2$ the generator of ${\bbZ}$. We will consider two cases:

\vspace{12pt}
\noindent
\underline{Case 1}. Let $t_2 \in {\hol}(F_2)$ and $t_2 \notin \aut(F_2)$. From Lemma \ref{lem-aut}, Part (1), $t_2$ is an element $x_2^k$, $k \in \bbZ$, $k\neq0$. 
Then $G = F_n{\rtimes}(\bbZ{\times}{\bbZ}/2{\bbZ})$ has the following presentation:
$$\begin{array}{ll}
{\langle} x_1, x_2, \ldots, x_n, t_1, t_2: &  t_1^2 = 1, t_1x_1t_1^{-1} = x_1^{-1}, t_1x_it_1^{-1} = x_i, i=2,\ldots, n, \\
&t_2x_1t_2^{-1} = x_1, t_2x_2t_2^{-1} = x_2, t_2x_it_2^{-1} = x_2^kx_ix_2^{-k}, i=3,\ldots,n,  [t_1, t_2] = 1{\rangle}
\end{array}$$
We change the generators by setting ${\xi} = x_2^{-k}t_2$.  First notice that $[t_1, {\xi}] =1$ because $t_1$ commutes with $t_2$ and $x_2$.
Then the presentation becomes:
$$\begin{array}{ll}
{\langle} x_1, x_2, \ldots, x_n, t_1, \xi : &  t_1^2 = 1, t_1x_1t_1^{-1} = x_1^{-1}, t_1x_it_1^{-1} = x_i, i=2,\ldots, x_n, \\
&{\xi}x_1{\xi}^{-1} = x_2^{-k}x_1x_2^k, {\xi}x_i{\xi}^{-1} = x_i, i=2,\ldots,x_n, [t_1, \xi ] = 1{\rangle}
\end{array}$$
Set $K_1 = {\langle}t_1, \xi, x_3,\ldots, x_n : \; t_1^2 = [t_1, \xi] = [t_1, x_i] =[\xi,x_i] 1, i=3,\ldots,n {\rangle} < F_n{\rtimes}(\bbZ{\times}{\bbZ}/2{\bbZ})$ and it is isomorphic
to $\bbZ^{n-1}{\times}{\bbZ}/2{\bbZ}$, which is a CAT(0)-group. 
Let $K =\bbZ{\times}{\bbZ}/2{\bbZ} = {\langle}t_1, t_2{\rangle}$, which is a virtually infinite cyclic. 
Also, set $K_2 < G$ with presentation:
$${\langle} x_1, x_2, t_1, \xi :   t_1^2 = 1, t_1x_1t_1^{-1} = x_1^{-1}, t_1x_2t_1^{-1} = x_2,
{\xi}x_1{\xi}^{-1} =x_2^{-k}x_1x_2^{k}, {\xi}x_2{\xi}^{-1} = x_2, [t_1, \xi ] = 1{\rangle}.$$ Notice that $G=K_1*_KK_2$. In order to show that $G$ is CAT$(0)$,
it suffices to show that $K_2$ is a CAT$(0)$ group. 

To that end, we change generators in $K_2$ by setting ${\zeta} = x_2^k{\xi}$. Then the presentation of $K_2$ becomes:
$${\langle} x_1, x_2, t_1, \zeta :   t_1^2 = 1, t_1x_1t_1^{-1} = x_1^{-1}, t_1x_2t_1^{-1} = x_2,
{\zeta}x_1{\zeta}^{-1} = x_1, {\zeta}x_2{\zeta}^{-1} = x_2, [t_1, \zeta ] = 1{\rangle}.$$
For Lemma \ref{lem-aut-F2}, Case 1, $K_2$ is CAT(0) and we are done.
%

\vspace{12pt}
\noindent
\underline{Case 2}. We assume $t_2 \in \aut(F_2)$. 
Using Lemma \ref{lem-aut-F2}, Part (2),  the group $G$ has
presentation:
$$\begin{array}{ll}
{\langle} x_1, x_2, \ldots, x_n, t_1, t_2: &  t_1^2 = 1, t_1x_1t_1^{-1} = x_1^{-1}, t_1x_it_1^{-1} = x_i, i=2,\ldots, x_n, \\
&t_2x_1t_2^{-1} = x_2^kx_1^{{\pm}1}x_2^{-k}, t_2x_2t_2^{-1} = x_2^{{\pm}1}, t_2x_it_2^{-1} = x_i, i=3,\ldots,n,  [t_1, t_2] = 1{\rangle}
\end{array}$$
Set $K_1 = {\langle}t_1, t_2, x_3,\ldots, x_n:\; t_1^2 = [t_1, t_2] = [t_1, x_i] = [t_2, x_i] = 1, i=3,\ldots,n {\rangle} \cong {\bbZ}^{n-1}{\times}{\bbZ}/2{\bbZ}$ and
$K = {\langle}t_1, t_2{\rangle} \cong {\bbZ}{\times}{\bbZ}/2{\bbZ}$ which are two subgroups of $G$. Finally, set $K_2$ to be
$${\langle} t_1, t_2, x_1, x_2:\;  t_1^2 = 1, t_1x_1t_1^{-1} = x_1^{-1}, t_1x_2t_1^{-1} = x_2, 
t_2x_1t_2^{-1} = x_2^kx_1^{{\pm}1}x_2^{-k}, t_2x_2t_2^{-1} = x_2^{{\pm}1}, [t_1, t_2] = 1{\rangle}.$$ It is obvious that $G=K_1*_KK_2$. To show that $G$ is CAT$(0)$
it suffices to show that $K_2$ is a CAT$(0)$ groups. 
 
Set ${\zeta} = x_2^{-k}t_2$ and the presentation becomes:
$${\langle} t_1, x_1, x_2, \zeta :\;  t_1^2 = 1, t_1x_1t_1^{-1} = x_1^{-1}, t_1x_2t_1^{-1} = x_2, 
{\zeta}x_1{\zeta}^{-1} = x_1^{{\pm}1}, {\zeta}x_2{\zeta}^{-1} = x_2^{{\pm}1}, [t_1, {\zeta}] = 1{\rangle},$$
which is CAT(0) from Lemma \ref{lem-aut-F2}.
\end{proof}

%
%

The reader should notice that there are more possibilities for the $\bbZ/2\bbZ$ action on the $F_2\rtimes\bbZ$ subgroups
of $\aut(F_2)$.

\frenchspacing

\end{document}